\title{On network dynamical systems with a nilpotent singularity}
\author{Hildeberto Jard\'on-Kojakhmetov
\address{
\orgdiv{Dynamical Systems, Geometry \& Mathematical Physics},
\orgname{University Of Groningen},
\orgaddress{\street{Nijenborg 9}, \postcode{9747 AG}, \state{Groningen},\country{The Netherlands}}
}
} 
\author{Christian Kuehn
\address{
\orgdiv{School of Computation Information and Technology, Department of Mathematics},
\orgname{Technical University of Munich},
\orgaddress{
\street{Boltzmannstra\ss e},
\postcode{85748},
\state{Garching bei München},
\country{Germany}}
}
}
\theoremstyle{thmstyleone}%
\newtheorem{theorem}{Theorem}
\newtheorem{proposition}[theorem]{Proposition}%
\theoremstyle{thmstyletwo}%
\newtheorem{remark}{Remark}%
\theoremstyle{thmstylethree}%
\numberwithin{equation}{section}
\begin{document}

\DOI{...}
\copyrightyear{2023}
\vol{...}
\pubyear{...}
\access{...}
\appnotes{...}
\copyrightstatement{...}
\firstpage{1}

\abstract{
    Network dynamics is nowadays of extreme relevance to model and analyze complex systems. From a dynamical systems perspective, understanding the local behavior near equilibria is of utmost importance. In particular, equilibria with \emph{at least} one zero eigenvalue play a crucial role in bifurcation analysis. In this paper, we want to shed some light on nilpotent equilibria of network dynamical systems. As a main result, we show that the blow-up technique, which has proven to be extremely useful in understanding degenerate singularities in low-dimensional ordinary differential equations, is also suitable in the framework of network dynamical systems. Most importantly, we show that the blow-up technique preserves the network structure. The further usefulness of the blow-up technique, especially with regard to the desingularization of a nilpotent point, is showcased through several examples including linear diffusive systems, systems with nilpotent internal dynamics, and an adaptive network of Kuramoto oscillators.
}

\keywords{Nilpotent singularities; Network dynamical systems; Blow-up method; Geometric desingularization.}

\maketitle

\section{Introduction}
We consider a networked system of $N$ \emph{scalar} nodes $x_i=x_i(t)\in\R$:
\begin{equation}\label{eq:sys1}
    \begin{split}
        \dot x_i = f_i(x_i,\mu_i)+\sum_{j=1}^Nw_{ij}h_{ij}(x_i,x_j,\lambda_{ij}),\qquad i=1,\ldots,N,
    \end{split}
\end{equation}
where $f_i$ and $h_{ij}$ are smooth functions with some ``internal'' parameters $\mu_i\in\R^{u_i}$, ``interaction parameters'' $\lambda_{ij}\in\R^{l_{ij}}$, and ``weights'' $w_{ij}\in\R$. We refer to $f_i$ as \emph{the internal dynamics} and to $h_{ij}$ as \emph{the interaction}. The topology of the network is encoded in $w_{ij}$, which are the coefficients of a weighted adjacency matrix. In compact form, we can write \eqref{eq:sys1} as
\begin{equation}\label{eq:sys2}
    \dot x = F(x,\mu) + H(x,w,\lambda),
\end{equation}
where $F$ includes all the internal dynamics and $H$ all the interactions; and with $x\in\R^N$, $\mu\in\R^{u_1+\cdots+u_N}=\R^u$, $w\in\R^{N^2}$ and $\lambda\in\R^l$ with $l=\sum_{i,j=1}^N l_{ij}$. We shall say that a system \emph{has a network structure} \cite{newman2018networks,porter2016dynamical,strogatz2001exploring} if it can be (re-)written in the form \eqref{eq:sys1}, or equivalently \eqref{eq:sys2}. The main idea is that one can clearly distinguish the internal, or uncoupled, dynamics from the network interaction. In the rest of this paper, we shall concentrate on studying the dynamics near a nilpotent equilibrium point of \eqref{eq:sys1}. We recall that given a differential equation $\dot x=f(x)$, $x\in\R^n$, an equilibrium point $x^*$ is said to be nilpotent if the eigenvalues of the Jacobian $\textnormal D_xf(x^*)$ are all zero. 

The main conclusion drawn from the analysis presented in this paper is that \emph{the blow-up transformation preserves the network structure}. We point out that, a priori, there is no guarantee for a coordinate change to preserve any sort of structure. This can be assessed by simply attempting a linear transformation on e.g. \eqref{eq:sys2}. The blow-up (see section \ref{sec:bu}) is a singular coordinate transformation that has been extensively used to desingularize the local dynamics of (mostly) low dimensional differential equations. Some further important observations are the following (for the details see section \ref{sec:main_bu}):
\begin{itemize}
    \item[\small$\bullet$ ] A node-directional blow-up induces dynamics on the ``blown-up'' edges. However, these dynamics occur at higher orders. That is, up to leading order terms, the blown-up network is still static, see section \ref{sec:node_bu}.
    \item[\small$\bullet$ ] A parameter-directional blow-up is, essentially, a rescaling such that a distinguished parameter is set to $1$ and the rest are kept small, see section \ref{sec:par_bu}.  
    \item[\small$\bullet$ ]  The blow-up seems to be especially useful for slowly adaptive networks, where, for example, an edge-directional blow-up leads to a static network with a distinguished edge fixed to $1$. Moreover, the adaptation rule that defines the dynamics of such distinguished edge is visible, globally, in the whole network, see section \ref{sec:edge_bu}.
\end{itemize}

Before going into the technical details, let us motivate our interest with some examples.

\subsection{Motivating examples}
In this section, we motivate considering networked systems with nilpotent equilibria through some examples. 


\begin{description}
    \item[Nilpotent internal dynamics: ] Consider a weakly coupled network of the form \eqref{eq:sys1} given by
\begin{equation}
    \begin{split}
        \dot x_i = f_i(x_i,\mu_i)+\ve\sum_{j=1}^Nw_{ij}h_{ij}(x_i,x_j,\lambda_{ij}),\qquad i=1,\ldots,N,
    \end{split}
\end{equation}
where $0<\ve\ll1$, and such that there is a fixed set of parameters $\mu_i^*$, and a point $x^*=(x_1^*,\ldots,x_N^*)$, such that $f_i(x_i^*,\mu_i^*)=\frac{\partial f_i}{\partial x_i}(x_i^*,\mu_i^*)=0$. In this case, the point $x^*$ is nilpotent for $\ve=0$ and an approach to investigate the dynamics for $\ve$ small could be via the blow-up technique, as we shall present in this paper. See a concrete example in section \ref{sec:example_nilpotent_internal}.

\begin{remark}
    Even though we focus on networks of scalar nodes, dynamic networks with nilpotent internal dynamics frequently appear once the nodes are considered multidimensional. For example, one may consider coupled neuronal models \cite{schwemmer2012theory} where each neuron modeled, e.g. via the Hodgkin-Huxley model or its simplifications contains nilpotent points in its internal dynamics. In a more general setting, nilpotent Hopf bifurcations have been considered for coupled cell networks \cite{elmhirst2006nilpotent}, see also \cite{golubitsky2006nonlinear,golubitsky2009bifurcations,maria2006homogeneous}. In such a case, an adapted version of \cite{hayes2016geometric} together with the blow-up analysis considered here could be applicable as well.
\end{remark}


\item[A class of nonlinear consensus protocols: ] Consider the general consensus protocol
\begin{equation}
    \dot x = -L\left(Ax+\sum_{i=2}^p F_i(x)\right),
\end{equation}
where $x\in\R^N$, $p\geq2$, $L$ is a Laplacian matrix, $A$ is a non-singular matrix and $F_i$ is a homogeneous polynomial vector field of degree $i$. The classical consensus protocol is obtained when $A=I$ and $F_i=0$ for all $i\geq2$. On the other hand, if $A=0$ we immediately have that the origin is nilpotent. Furthermore, even if one simply considers $\dot x=-LF_2(x)$, any $x^*$ so that $F(x^*)\in\ker L$ would correspond to a nilpotent point. Since the kernel of $L$ is non-trivial, nilpotent points become relevant in this situation. See \cite{bonetto2022nonlinear} for a particular case study, and \cite{jardon2020fast} for a similar, but adaptive, example.


\item[Canceling linear dynamics: ]  Consider \eqref{eq:sys1} in the particular matrix form
\begin{equation}
    \dot x = Ax+ Bx+\cdots,
\end{equation}
where $A$ is diagonal, accounting for the (leading part of the) internal dynamics, and $B$ is a matrix describing the (leading part of the) interaction; the dots stand for higher-order terms\footnote{Throughout this document, ``higher-order terms'' means terms of order $\cO(|x|^k)$ as $|x|\rightarrow 0$ for some $k>1$ to be specified when appropriate.}. One can then realize that, upon variation of parameters, some components of these linear parts may ``cancel'' in the sense of having a nilpotent form. Clearly, this does not require $A=-B$ in general. For example, consider the homogeneous case where $A=aI_N$ with $a\in\R$. So, it suffices to notice that if $T$ is the (complex) matrix that brings $B$ into its Jordan canonical form, then the transformation $x\mapsto T^{-1}x$ implies $T(aI_N+B)T^{-1}=aI_N+J_B$, where $J_B$ is the Jordan canonical form of $B$, and is upper-triangular. In turn, $aI_N+J_B$ can be nilpotent, or at least have some zero eigenvalues, for $B\neq-A$. For an example see section \ref{sec:diffusive}, and \cite{nijholt2023chaotic} for a related scenario.

\item[(Slowly) adaptive networks: ] As in the classical (low-dimensional) setting, nilpotent singularities are extremely relevant for slow-fast systems \cite{kuehn2015multiple,wechselberger2020geometric}. Consider the (slowly) adaptive dynamic network \cite{berner2023adaptive,sawicki2023perspectives}
\begin{equation}\label{eq:adapt0}
    \begin{split}
        \dot x_i &= f_i(x_i,\mu_i)+\sum_{j=1}^Nw_{ij}h_{ij}(x_i,x_j,\lambda_{ij})\\
        \dot w_{ij} &= \ve g_{ij}(x,w_{ij}).
    \end{split}
\end{equation}
If one were to attempt the analysis of \eqref{eq:adapt0} with techniques derived from Geometric Singular Perturbation Theory (GSPT), the nilpotent points of \eqref{eq:adapt0} restricted to $\ve=0$ define a class of singularities nearby which the solutions may exhibit a quite intricate behavior for $0<\ve\ll 1$. As a particular example consider a family of adaptive Kuramoto oscillators
\begin{equation}
	\begin{split}
		\phi_i' &= \Omega - \frac{1}{N}\sum_{j=1}^N  w_{ij} \sin(\phi_i-\phi_j+\alpha)\\
		 w_{ij} ' &=-\ve\left( \sin(\phi_i-\phi_j+\beta)+ w_{ij}  \right),
	\end{split}
\end{equation}
with parameters $\alpha,\beta$ close to zero.
A phase-locked equilibrium is given by
\begin{equation}
\begin{split}
	\phi_i^* &= \psi_i\\
	w_{ij}^* &= -\sin(\psi_i-\psi_j+\beta)\\
    \Omega &= -\frac{1}{N}\sum_{j=1}^N\sin(\psi_i-\psi_j+\beta)\sin(\psi_i-\psi_j+\alpha),
\end{split}
\end{equation}
for some $\psi_i\in[0,2\pi)$, $i=1,\ldots,N$. As an example, let us take a 4-node network and $\psi=(\psi_1,\psi_2,\psi_3,\psi_4)=\left(0,\frac{\pi}{2},\pi,\frac{3\pi}{2}\right)$. For this choice of equilibrium phases, the phase-locked equilibrium is nilpotent for $\alpha=\beta=0$ and $\Omega=-\frac{1}{2}$. Indeed we have that the relevant part of the linearization, at the aforementioned phase-locked equilibrium, is given by
\begin{equation}
\begin{split}
	J &=
	\frac{1}{4}\begin{bmatrix}
		w_{13} & 0 & -w_{13} & 0\\
		0 & w_{24} & 0 & -w_{24}\\
		-w_{31} & 0 & w_{31} & 0 \\
		0 & -w_{42} & 0 & w_{42}
	\end{bmatrix}.
\end{split}
\end{equation}
So, further noting that $w_{13}^*=w_{31}^*=w_{24}^*=w_{42}^*=0$ we have that $J=0$, implying that the phase-locked equilibrium is nilpotent. This example is further detailed in section \ref{sec:example_adaptive}.

\begin{remark}
    Adaptivity is, of course, not necessary. See for example \cite{sclosa2021completely} where nilpotent equilibria of the classical Kuramoto model are considered.
\end{remark}

\end{description}

As motivated in the previous examples, we are interested in network dynamical systems \eqref{eq:sys1}, equivalently \eqref{eq:sys2}, with a nilpotent singularity. Nilpotent points are highly important because they hint at the possibility of complicated bifurcations occurring in their vicinity. From now on, we shall assume that there is a set of parameters $\mu^*$, $\lambda^*$ and edge weights $w^*$ such that $x^*$ is an \emph{isolated} nilpotent equilibrium point of \eqref{eq:sys2}. 
\begin{remark}
Assuming $x^*$ being isolated is not too strict, at least for our arguments regarding the blow-up transformation preserving the network structure, because one can also blow-up higher-dimensional sets of equilibria in a similar way. We present a brief description of the blow-up transformation and several relevant references in section \ref{sec:bu}.   
\end{remark}

Without loss of generality, we further assume that the nilpotent point is at
$x^*=0$ for $(\mu^*,\lambda^*,\omega^*)=(0,0,0)$. However, we emphasize that, as presented in the examples, a nilpotent point does not necessarily have to be related to a disconnected network.  Let $a_i(\mu)=\frac{\partial f_i}{\partial x_i}(0,\mu)$,  $D(\mu)=\diag\left\{ a_i(\mu)\right\}_{i=1}^N$, and $A(\omega,\lambda)=\frac{\partial H}{\partial x}(0,\omega,\lambda)$. So, we can write \eqref{eq:sys2} as
\begin{equation}\label{eq:sys3}
    \dot x = (D(\mu) + A(\omega,\lambda))x+\tilde F(x,\mu)+\tilde H(x,w,\lambda),
\end{equation}
where, from our assumptions, $D(0) + A(0,0)$ is a nilpotent matrix (and non of which needs to be the zero matrix) and $\tilde F$ and $\tilde H$ stand, respectively, for the higher-order terms (by this we mean monomials of degree higher than one) of the internal dynamics and of the interaction.

It is an important question whether one may indeed expect the leading term $D(\mu)+A(\omega,\lambda)$ in \eqref{eq:sys3} to be nilpotent for parameters $(\mu,\omega,\lambda)=(\mu^*,\omega^*,\lambda^*)$. Roughly speaking, since $n$ eigenvalues are required to be zero, we expect that this can be achieved, generically, for $n$-parameter families of dynamic networks \eqref{eq:sys3}. More formally, since $GL(n)$ has dimension $n^2$ and since there are at most $n(n-1)$ ``different'' nilpotent matrices, one expects that such matrices appear generically whenever $D(\mu)+A(\omega,\lambda)$ has at least $n$ independent parameters. In other words, if $D(\mu)+A(\omega,\lambda)$ has at least $n$ independent parameters, then there is at least one particular choice $(\mu^*,\omega^*,\lambda^*)$ such that $D(\mu^*)+L(\omega^*,\lambda^*)$ is nilpotent. Moreover, since the only nilpotent symmetric matrix is the zero matrix, we shall consider that the interaction is \emph{directed}.

\begin{remark}
    We emphasize that we do not put $D(0) + A(0,0)$ in normal form since, generally speaking, a similarity transformation destroys the network structure of the system.
\end{remark}

\section{Blowing up preserves the network structure}\label{sec:main_bu}

Our aim in this section is to show that the (singular) coordinate transformation known as blow-up, when applied to a nilpotent singularity of a network dynamical system, preserves the network structure. Due to the importance of the blow-up, although the literature contains already plenty of information on it \cite{alvarez2011survey,jardon2019survey,kuehn2015multiple}, we prefer to provide a brief recollection of what is known as a blow-up in the way it is used in this paper. More importantly, in this section, we provide certain terminology and fix some notations that are later used in the examples of section \ref{sec:examples}.

\subsection{The blow-up}\label{sec:bu}
Let $X:\R^n\to \R^n$ be a smooth vector field\footnote{What follows also holds for smooth vector fields on smooth manifolds by taking a chart in a neighborhood of the nilpotent point.} such that $X(0)=0$ and $0\in\R^n$ is a nilpotent point. Let $\phi:\S^{n-1}\times I\to \R^n$ be a weighted polar transformation given by
\begin{equation}
    (\bx_1,\ldots,\bx_n,r)\mapsto(r^{\alpha_1}\bx_1,\ldots,r^{\alpha_n}\bx_n)=(x_1,\ldots,x_n),
\end{equation}
where $\alpha_i\in\N$ for all $i=1,\ldots,n$, $\sum_{i=1}^n\bx_i^2=1$ (i.e. $(\bx_1,\ldots,\bx_n)\in\S^{n-1}$) and $r\in I\subset\R$ where $I$ is an interval containing the origin. We notice that $\phi^{-1}$ maps the origin $0\in\R^n$ to the sphere $\S^{n-1}\times\left\{ 0 \right\}$ and we usually say that ``the origin is blown-up to a sphere''. On the other hand, $\phi$ is a diffeomorphism whenever $r>0$. Due to the weights in the transformation, one refers to the transformation as quasihomogeneous (and homogeneous when all weights are the same). These weights often facilitate the desingularization (see a description below), but their appropriate choice can be quite complicated. If the diagram of figure \ref{fig:bu_diag} commutes, then we call $\bX$ the blown-up vector field.
\begin{figure}[htbp]
    \centering
    \begin{tikzpicture}
        \node (a) at (0,0) {$\S^{n-1}\times I$};
        \node (b) at (3,0) {$\R^n$};
        \node (c) at (0,-2) {$T\S^{n-1}\times I$};
        \node (d) at (3,-2) {$\R^n$};
        \draw[->] (a) -- (b) node[midway, above]{$\phi$};
        \draw[->] (c) -- (d) node[midway, above]{$\DD\phi$};
        \draw[->] (a) -- (c) node[midway, left]{$\bX$};
        \draw[->] (b) -- (d) node[midway, right]{$X$};
    \end{tikzpicture}
    \caption{Commutative diagram defining a blow-up transformation.}
    \label{fig:bu_diag}
\end{figure}

Since $X(0)=0$, it follows that $\bX|_{r=0}=0$. However, if the weights are appropriately chosen, one can often \emph{desingularize} $\bX$ by dividing by some power of $r$. In other words, one can define the \emph{desingularized} vector field $\tilde X=\frac{1}{r^k}\bX$ such that $\tilde X(0)\neq0$ and $\tilde X$ has semi-hyperbolic singularities only (and no more nilpotent ones).  Recalling that $\phi$ is a diffeomorphism for $r>0$, the main advantage now is that for $r>0$ small (i.e. near $\S^{n-1}\times\left\{ 0 \right\}$) 
 one can recover the dynamics of $\bX$ from those of $\tilde X$, and in turn these describe the dynamics of $X$ near the origin. In practice, however, working with spherical coordinates can become cumbersome very quickly. Hence, one prefers to work with \emph{directional} blow-ups, which are defined by replacing $\S^{n-1}$ by its charts. Moreover, blowing up using a sphere is not necessary, as one can blow-up, for example, to a hyperbolic manifold \cite{kuehn2016remark}.

Although the origins of the blow-up transformation are in algebraic geometry, the blow-up transformation has proven extremely useful in dynamical systems to study nilpotent singularities. It has been, for example, used to study degenerate bifurcation problems and their unfoldings \cite{takens1974singularities,dumortier1993techniques}, see also the survey \cite{alvarez2011survey}; or, of particular relevance for this paper, to study some non-hyperbolic points of slow-fast systems \cite{dumortier1996canard,krupa2001extending}, see also the survey \cite{jardon2019survey}. Indeed, in the aforementioned context, one can consider nowadays that the blow-up transformation is a standard tool of Geometric Singular Perturbation Theory \cite{kuehn2015multiple,wechselberger2020geometric}. Over the past decades, the blow-up transformation has been mostly used for low-dimensional problems. In fact, it is known that analytic vector fields up to dimension three can be desingularized after a finite number of blow-ups \cite{panazzolo2002desingularization,panazzolo2006resolution}. In this paper, we will see the usefulness of the blow-up transformation on systems that are usually considered large-dimensional, dynamic networks. As dynamic networks are, usually, of large dimension, a relevant direction in this regard is the blow-up in the context of PDEs \cite{hummel2022geometric,jelbart2023formal,engel2022geometric,engel2020blow}, which so far has been considered without networks structure and, in some cases, can be regarded as mean-field limits of dynamic networks \cite{lancellotti2005vlasov,gkogkas2022graphop}.

\subsection{Structure preservation}
This section is dedicated to showing that a general directional blow-up preserves the network structure of a given dynamical system. 
\begin{remark}
    As described in the previous section, usually one expects that after desingularization via blow-up one gains a certain degree of hyperbolicity. The number of blow-up transformations to completely desingularize a nilpotent singularity is, however, not known a priori. This means that, after a blow-up, some of the new singularities may be semi-hyperbolic or (even still) nilpotent. This is nevertheless already good progress. In the former case, further analysis can be restricted to a lower dimensional subset, the corresponding center manifold, where the blow-up can be applied again. In the latter, the process of blowing-up can also be repeated in a system that is, usually, less degenerate than the starting one. Nevertheless, one expects that after a finite number of blow-ups, one can fully describe the dynamics of a system near a nilpotent singularity.
    
    We emphasize that in this section, we only focus on structure preservation via blowing up. The question of (full) desingularization is a nontrivial one and is highly dependent on the particular problem at hand, and the choice of weights in the quasihomogeneous blow-up (a nontrivial task in itself). We discuss some cases where one can achieve desingularization later in the examples of section \ref{sec:examples}.
\end{remark}

Using the notation introduced above, let us consider a vector field of the form
\begin{equation}\label{eq:X}
    X:\left\{\begin{aligned}
        \dot x &= F(x,\sigma)+H(x,\sigma)=(D_\sigma+A_\sigma)x+\tilde F(x,\sigma)+\tilde H(x,\sigma)\\
        \dot\sigma &=0,
    \end{aligned}\right.
\end{equation}
where $F$ stands for the internal dynamics, $H$ the interaction, and $\sigma\in\R^p$ includes all possible parameters, hence $D_0+A_0$ is nilpotent. \emph{Our objective is to show that a local vector field obtained by a directional blow-up has the same identifiable structure of internal dynamics plus interaction}.

\begin{remark}
    The blow-up technique is local. Hence, without further mentioning it, we assume that the vector fields we are considering are polynomial.
\end{remark}

Let us define a quasihomogeneous blow-up via the map $\Xi:\R_{\geq0}\times\mathbb S^{N+p-1}\to\R^{N+p}$ given by:
\begin{equation}
    \Xi:(r,\bx_1,\ldots,\bx_n,\bs_1,\ldots,\bs_p)\mapsto\underbrace{(r^{\alpha_1}\bx_1,\dots,r^{\alpha_n}\bx_n,r^{\beta_1}\bs_1,\ldots,r^{\beta_p}\bs_p)}_{=(x_1,\ldots,x_N,\sigma_1,\ldots,\sigma_p)},
\end{equation}
and where
\begin{equation}
    \sum_{i=1}^N\bx_i^2+\sum_{j=1}^p\bs_j^2=1.
\end{equation}

We can define a \emph{directional} blow-up by fixing one of the blown-up variables in $\mathbb S^{N+p-1}$ to $\pm1$ and let the rest be coordinates in $\R^{n-1}$ (this is, of course, reminiscent of a stereographic projection). The sign is, for now, rather inessential, so let us consider charts
\begin{equation}
    \begin{split}
        K_{i}&=\left\{ \bx_i=1 \right\},\qquad i=1,\ldots,N\\
        Q_{j}&=\left\{ \bs_j=1 \right\}, \qquad j=1,\ldots,p.
    \end{split}
\end{equation}

Let us denote a corresponding directional blow-up by
\begin{equation}
    \begin{split}
        \Phi_{i}&=\Xi|_{\bx_i=1},\\
        \Psi_{j}&=\Xi|_{\bs_j=1}.
    \end{split}
\end{equation}

The blown-up vector field in the chart $K_i$ (resp. $Q_j$) is then obtained via the application of the corresponding change of coordinates $\Phi_i$ (resp. $\Psi_j$). For convenience, let
\begin{equation}\label{eq:directional_vfs}
\begin{split}
    \bar X_i &=(\DD\Phi_{i})^{-1}X\circ \Phi_i, \qquad i=1,\ldots,N\\
    \bar Y_j &= (\DD\Psi_{j})^{-1}X\circ \Psi_j,\qquad j=1,\ldots,p.
\end{split}
\end{equation}

\begin{remark}\leavevmode
\begin{itemize}
    \item[\small$\bullet$ ]  The quasi-homogeneous blow-up requires a careful choice of weights so that the local vector fields are well defined. In this section, however, we are only interested in the form the blown-up vector fields take and hence assume that \eqref{eq:directional_vfs} are all well defined, especially at $r=0$ (possibly after a division by some power of $r$).
    \item[\small$\bullet$ ] $\Phi_i$ corresponds to a blow-up of the node $x_i$. Hence, within the context of networks, we refer to it as a ``node blow-up''. On the other hand, the need to blow-up in the direction of a parameter arises when such a parameter is responsible for some singular behavior, for example when it induces a nilpotent singularity. So, if for example, $\sigma_k$ corresponds to some ``weight parameter'' $w_{ij}$, we shall refer to $\Psi_k$ as an ``edge blow-up''. Of course, everything that we will present below can be extended to network dynamical systems given by
    \begin{equation}
    \begin{split}
        \dot x_i &= f_i+\sum_{j=1}^Nw_{ij}h_{ij},\\
        \dot w_{ij} &= g_{ij},
    \end{split}
\end{equation}
where the ``edge-directional blow-up'' would be more apparent. This consideration is, however, inessential for the purposes of the paper, which is to show the preservation of the network structure after the blow-up. See more details in section \ref{sec:edge_bu}.
    
\end{itemize}
\end{remark}

Since the compositions $X\circ\Phi_i$ and $X\circ\Psi_j$ do not change the network structure, we only need to look at the effect of pre-multiplication by the inverse of the derivative of the directional blow-up. 

\subsection{Node directional blow-up}\label{sec:node_bu}

We first focus on the node blow-up, that is on the $\bar X_i$'s. For convenience and simplicity of the exposition, for each $i=1,\ldots,N$ let us reorder the components of $X$ according to the coordinates $$(x_i,x_1,\ldots,x_{i-1},x_{i+1},\ldots,x_N,\sigma_1,\ldots,\sigma_p).$$
We call such a vector filed $X_i$, but notice that we are simply swapping the position of the $i$-th component\footnote{If \eqref{eq:X} is given by $X=\sum_{i=1}^Nf_i\frac{\partial}{\partial x_i}+\sum_{j=1}^p0\frac{\partial}{\partial \sigma_j}$, then
$X_i=f_i\frac{\partial}{\partial x_i}+\sum_{k=1,k\neq i}^Nf_k\frac{\partial}{\partial x_k}+\sum_{j=1}^p0\frac{\partial}{\partial \sigma_j}.$}. In this way the corresponding directional blow-up 
$\Phi_i$ is given in coordinate form by 
\begin{equation}
\begin{pmatrix}
r^{\alpha_i}\\
r^{\alpha_1}\bx_1\\
\vdots\\
r^{\alpha_{i-1}}\bx_{i-1}\\
r^{\alpha_{i+1}}\bx_{i+1}\\
\vdots\\
r^{\alpha_N}\bx_N\\
r^{\beta_1}\bs_1\\
\vdots\\
r^{\beta_p}\bs_p
\end{pmatrix} 
=
\begin{pmatrix}x_i\\x_1\\ \vdots\\x_{i-1}\\x_{i+1}\\ \vdots\\x_N\\ \sigma_1\\ \vdots\\ \sigma_p   
\end{pmatrix},    
\end{equation}
and therefore
\begin{equation}
    \DD\Phi_i=\begin{bmatrix}
        \rho_i & 0_{1\times (N-1)}  & 0_{1\times p}\\
        \chi_i & R_i & 0_{(N-1)\times p}\\
        \zeta & 0_{p\times(N-1)} & B
    \end{bmatrix}
\end{equation}
where 
\begin{equation}
    \begin{split}
        \rho_i&=\alpha_ir^{\alpha_i-1},\\
        \chi_i&=(\alpha_1r^{\alpha_1-1}\bx_1,\cdots,\alpha_{i-1}r^{\alpha_{i-1}-1}\bx_{i-1},\alpha_{i+1}r^{\alpha_{i+1}-1}\bx_{i+1},\cdots,\alpha_{N}r^{\alpha_{N}-1}\bx_{N})^\top,\\
        R_i &=\diag\left\{ r^{\alpha_1},\cdots,r^{\alpha_{i-1}},r^{\alpha_{i+1}},\cdots,r^{\alpha_N}\right\},\\
        \zeta &=(\beta_1 r^{\beta_1-1},\cdots,\beta_pr^{\beta_p-1}\bs_p)^\top,\\
        B&=\diag\left\{ r^{\beta_1},\cdots,r^{\beta_p} \right\}.
    \end{split}
\end{equation}

Since $\DD\Phi_i$ is lower triangular, its inverse is also lower triangular. In fact, we have:
\begin{equation}
    (\DD\Phi)^{-1}=\frac{1}{\rho_i}\begin{bmatrix}
         1 & 0_{1\times(N-1)} & 0_{1\times p} \\
        -R_i^{-1}\chi_i & \rho_i R_i^{-1} & 0_{(N-1)\times p}\\
        -B^{-1}\zeta & 0_{p\times(N-1)} & \rho_iB^{-1}.
    \end{bmatrix}
\end{equation}
\begin{remark}
    As expected, $\DD\Phi$ is invertible for $r\neq0$.
\end{remark}

Thus, the local vector field $\bX_i=(\DD\Phi_i)^{-1}X_i\circ\Phi_i$ can be written in local coordinates\footnote{We recycle the coordinate notation in each chart to avoid introducing obfuscating new terminology. So, for example, the local coordinates in the chart $K_i$ are $(r,\bx_1,\ldots,\bx_{i-1},\bx_{i+1},\ldots,\bx_N,\bs_1,\ldots,\bs_p)$, and similarly the local coordinates in the chart $Q_j$ are $(r,\bx_1,\ldots,\bx_N,\bs_1,\ldots,\bs_{j-1},\bs_{j+1},\ldots,\bs_p)$. The distinction between these local coordinates is only necessary when transitioning from one chart to another. Furthermore, along the text we use the common notation $r^\alpha\bx=(r^{\alpha_1}\bx_1,\ldots,r^{\alpha_N}\bx_N)$, which within the chart $K_i$ should be understood as $r^\alpha\bx=(r^{\alpha_i},r^{\alpha_1}\bx_1,\ldots,r^{\alpha_{i-1}}\bx_{i-1},r^{\alpha_{i+1}}\bx_{i+1},r^{\alpha_N}\bx_N)$, and similarly for $r^\beta\bs$.} as
\begin{equation}
    \bX_i:\begin{cases}
        r'&= \frac{r^{1-\alpha_i}}{\alpha_i}\left(\bar f_i +\bar h_i \right), \\
        \bx_k'&= \frac{1}{r^{\alpha_k}}\left( \bar f_k+\bar h_k-\frac{\alpha_k}{\alpha_i}r^{\alpha_k-\alpha_i}\bx_k(\bar f_i+\bar h_i) \right),\\
        \bs_j'&=-\frac{\beta_j}{\alpha_i}r^{-\alpha_i}\bs_j(\bar f_i+\bar h_i),
    \end{cases}
\end{equation}
where $k=1,\ldots,i-1,i+1,\ldots,N$, $j=1,\ldots,p$ and where $\bar f_\ell$ and $\bar h_\ell$ are the $\ell$-th component of $F\circ\Phi_i$ and $H\circ\Phi_i$ respectively. Notice already that the equations for $\bx_k$ keep, in some sense to be detailed below, the network structure.

Let $[M]_{ij}$ denote the $i,j$ entry of the matrix $M$. Recall from \eqref{eq:sys3} that $F+H=(D_\sigma+A_\sigma)x+\cdots$. This implies that, up to leading order terms, we have:
\begin{equation}
    \begin{split}
        \bar f_i+\bar h_i &= (a_i(r^\beta\bs)+[A_{r^\beta\bs}]_{ii})r^{\alpha_i}+\sum_{\substack{j=1\\j\neq i}}^N[A_{r^\beta\bs}]_{ij}r^{\alpha_j}\bx_j+\cdots\\
        \bar f_k+\bar h_k &= [A_{r^\beta\bs}]_{ki}r^{\alpha_i}+ (a_k(r^\beta\bs)+[A_{r^\beta\bs}]_{kk})r^{\alpha_k}\bx_k+\sum_{\substack{j=1\\j\neq i\\j\neq k}}^N[A_{r^\beta\bs}]_{kj}r^{\alpha_j}\bx_j+\cdots,
    \end{split}
\end{equation}
which hints to the choice of the blow-up weights $\alpha_i=\alpha$ for all $i=1,\dots,N$ so that every local vector field $\bX_i$ is well defined for $r=0$. Under such a choice we have, up to leading order terms, the blown-up vector field $\bX_i$ given by:
\begin{equation}\label{eq:bu1}
    \begin{split}
        r'&= \frac{r}{\alpha}\left( (a_i(r^\beta\bs)+[A_{r^\beta\bs}]_{ii})+\sum_{\substack{j=1\\j\neq i}}^N[A_{r^\beta\bs}]_{ij}\bx_j+\cdots\right)\\
        \bx_k'&=\left( [A_{r^\beta\bs}]_{ki}+ (a_k(r^\beta\bs)+[A_{r^\beta\bs}]_{kk})\bx_k+\sum_{\substack{j=1\\j\neq i\\j\neq k}}^N[A_{r^\beta\bs}]_{kj}\bx_j+\cdots \right)\\
        &\qquad -\bx_k\left( (a_i(r^\beta\bs)+[A_{r^\beta\bs}]_{ii})+\sum_{\substack{j=1\\j\neq i}}^N[A_{r^\beta\bs}]_{ij}\bx_j+\cdots\right)\\
        \bs_j'&=-\frac{\beta_j}{\alpha}\bs_j\left( (a_i(r^\beta\bs)+[A_{r^\beta\bs}]_{ii})+\sum_{\substack{j=1\\j\neq i}}^N[A_{r^\beta\bs}]_{ij}\bx_j+\cdots\right).
    \end{split}
\end{equation}

For $r=0$ we have that $r'=0$ and
\begin{equation}\label{eq:bu1r0}
    \begin{split}
        \bx_k'&= [A_0]_{ki} + (a_k(0)+[A_0]_{kk}-a_i(0)-[A_0]_{ii})\bx_k+\sum_{\substack{j=1\\j\neq i\\j\neq k}}^N[A_{0}]_{kj}\bx_j+\cO(|\bx|^2)\\
        \bs_j'&= -\frac{\beta_j}{\alpha}\left( a_i(0)+[A_0]_{ii}\right)\bs_j+\cO(\bs_j\bx),
    \end{split}
\end{equation}
where $k=1,\ldots,i-1,i+1,\ldots,N$. We can see that the blown-up system \eqref{eq:bu1} has ``dynamic" parameters $\bs$. However, for $r=0$ (i.e. \eqref{eq:bu1r0}) the network is static.  Moreover, we see that the equilibrium point of $\bx'$ may not be at the origin any more.

\begin{description}
\item[Interpretation: ] one can give several network interpretations to the equation for $\bx'$ in \eqref{eq:bu1r0}. For this, it is convenient to rewrite the first part of \eqref{eq:bu1r0} as
\begin{equation}
    \bx_k'= [A_0]_{ki} + (a_k(0)-a_i(0)-[A_0]_{ii})\bx_k+\sum_{\substack{j=1\\j\neq i}}^N[A_{0}]_{kj}\bx_j+\cdots,
\end{equation}
where we can notice a network with $N-1$ nodes $(\bx_1,\ldots,\bx_{i-1},\bx_{i+1},\ldots,\bx_N)$.
\begin{itemize}
    \item[\small$\bullet$ ] As a first interpretation, one may say that the internal dynamics are now modified (``shifted'' by $[A_0]_{ki}$ and ``scaled'' by $[A_0]_{ii}$) by the influence of node $i$ (which has been blown-up and ``does not exists anymore''). Regarding the interaction, that remains the same, except that the node $i$ does not appear anymore.

    \item[\small$\bullet$ ]  Another interpretation could be given when we re-write \eqref{eq:bu1r0} as:
\begin{equation}
    \bx_k' = [ A_0]_{ki} + a_k(0)\bx_k + \sum_{\substack{j=1\\j\neq i}}^N [\tilde A_0]_{kj} \bx_j+\cdots,
\end{equation}
where $[\tilde A_0]_{kj}=-a_i(0)-[L_0]_{ii}+[A_0]_{kk}$ if $j=k$ and $[\tilde A_0]_{kj}=[A_0]_{kj}$ otherwise. This, instead can be interpreted as the ``self-interaction'' having been modified by the (nonexistent) node $i$, while the term $[A_0]_{ki}$ can be regarded as a constant input, which is nonzero if and only if there is a connection from node $i$ to node $k$.
\end{itemize}

$ $


Finally, if $r=0$ is hyperbolic in the $r$-direction (which would be the case under desingularization), then we expect that \eqref{eq:bu1} is a regular perturbation of \eqref{eq:bu1r0} for $r>0$ sufficiently small. This means that the dynamics of  \eqref{eq:X} in a small neighborhood of the nilpotent origin can be determined from to those of \eqref{eq:bu1r0}, see a relevant example in  section \ref{sec:examples}.
\end{description}

\subsection{Parameter directional blow-up}\label{sec:par_bu}
We now turn our attention to the blow-up in the parameter direction (the $\bar{Y}_i$'s in \eqref{eq:directional_vfs}). For convenience, let us recall that we are considering the (extended) vector field
\begin{equation}\label{eq:bup1}
    X:\left\{\begin{aligned}
        \dot x &= F(x,\sigma)+H(x,\sigma)=(D_\sigma+L_\sigma)x+\tilde F(x,\sigma)+\tilde H(x,\sigma)\\
        \dot\sigma &=0,
    \end{aligned}\right.
\end{equation}
where $L_0+D_0$ is nilpotent. If one would rewrite \eqref{eq:bup1} as $X=\sum_{i=1}^Nf_i\frac{\partial}{\partial x_i}+\sum_{j=1}^p0\frac{\partial}{\partial \sigma_j}$, it is rather convenient to consider the vector field 
\begin{equation}
    Y_j=0\frac{\partial}{\partial \sigma_j}+\sum_{i=1}^Nf_i\frac{\partial}{\partial x_i}+\sum_{k=1,\,k\neq j}^p0\frac{\partial}{\partial \sigma_k},
\end{equation}
with the directional blow-up $\Psi_j$ given by local coordinates:
\begin{equation}
\begin{pmatrix}
r^{\beta_j}\\
r^{\alpha}\bx_1\\
\vdots\\
r^{\alpha}\bx_N\\
r^{\beta_1}\bs_1\\
\vdots\\
r^{\beta_{j-1}}\bs_{j-1}\\
r^{\beta_{j+1}}\bs_{j+1}\\
\vdots\\
r^{\beta_p}\bs_p
\end{pmatrix} 
=
\begin{pmatrix}
\sigma_j\\
x_1\\ \vdots\\x_N\\ \sigma_1\\ \vdots\\ \sigma_{j-1}\\  \sigma_{j+1}\\
\vdots\\
\sigma_p   
\end{pmatrix}.    
\end{equation}
Recall that from the previous section we have already chosen the blow-up weights $\alpha_i=\alpha$ for all $i=1,\ldots,N$. Therefore
\begin{equation}
    \DD\Psi_j=\begin{bmatrix}
        \rho_j & 0_{1\times N} & 0_{1\times (p-1)}\\
        \chi & R & 0_{N\times(p-1)}\\
        \zeta_j & 0_{(p-1)\times N} & B_j
    \end{bmatrix},
\end{equation}
where 
\begin{equation}
    \begin{split}
        \rho_j &=\beta_j\bs_j^{\beta_j-1}\\
        \chi &= (\alpha r^{\alpha-1}\bx_1,\ldots,\alpha r^{\alpha-1}\bx_N)^\top\\
        R &=\diag\left\{ r^{\alpha},\ldots,r^{\alpha}\right\}\\
        \zeta_j&=(\beta_1r^{\beta_1-1}\bs_1,\ldots,\beta_{j-1}r^{\beta_{j-1}-1}\bs_{j-1},\beta_{j+1}r^{\beta_{j+1}-1}\bs_{j+1},\ldots,\beta_pr^{\beta_p-1}\bs_p)^\top\\
        B_j &=\diag\left\{ r^{\beta_1},\ldots,r^{\beta_{j-1}},r^{\beta_{j+1}},\ldots,r^{\beta_{p}}\right\}.
    \end{split}
\end{equation}
Following similar steps as in the previous section, we find that the corresponding local vector field (in the chart $Q_j$) reads as
\begin{equation}
    \bar Y_j=\begin{cases}
        r' &=0\\
        \bx_i'&=\frac{1}{r^\alpha}(\bar f_i+\bar h_i)\\
        \bs_k'&=0,
    \end{cases}
\end{equation}
where $k=1,\ldots,j-1,j+1,\ldots,p$, and $\bar f_\ell$ and $\bar h_\ell$ are the $\ell$-th component of $F\circ\Psi_j$ and $H\circ\Psi_j$ respectively. Again, it is evident that the blown-up vector field $\bar Y_j$ still has a network structure. More specifically we have:
\begin{equation}
    \begin{split}
        \bar f_i&=a_i(r^\beta\bs)r^\alpha\bx_i + \cO(r^{2\alpha})\\
        \bar h_i &= \sum_{k=1}^N[A_{r^\beta\bs}]r^\alpha\bx_k+\cO(r^{2\alpha}).
    \end{split}
\end{equation}
We recall that in the chart $Q_j$ we use the short-hand notation $$r^\beta\bs=(r^{\beta_1}\bs_1,\ldots,r^{\beta_{j-1}}\bs_{j-1},r^{\beta_j},r^{\beta_{j+1}}\bs_{j+1},\ldots,r^{\beta_p}\bs_p).$$
\begin{description}
    \item[Interpretation: ] in this case the blow-up is essentially a rescaling, where one wants to focus on the influence of the particular parameter $\sigma_j$. After successful desingularization (which depends on the vector fields and the choice of blow-up weights), the blown-up vector field $\bar Y_j$ is, qualitatively speaking, $Y_j$ with the parameter $\bs_j=1$ (or $\bs_j=-1$ depending on the direction of the blow-up). See the example in section \ref{sec:example_adaptive}. 
\end{description}

From the above insight, since a parameter $\sigma_i\in\sigma$ could be an edge weight, and due to its relevance in applied sciences, we now discuss the edge-directional blow-up in the context of adaptive networks.

\subsection{Edge-directional blow-up for adaptive networks}\label{sec:edge_bu}
Let us now consider a slowly adaptive network
\begin{equation}\label{eq:model_adap}
    \begin{split}
        \dot x_i &= f_i(x_i)+\sum_{j=1}^Nw_{ij}h_{ij}(x_i,x_j)\\
        \dot w_{ij} &= \ve g_{ij}(x,w_{ij}).
    \end{split}
\end{equation}
\begin{remark}
    Frequently, the adaptation rule $g_{ij}$ in applications depends only on the nodes $(i,j)$, see e.g. \cite{berner2023adaptive}; but for our purposes such specification is not relevant.
\end{remark}
Consider an edge-directional blow-up for the edge $(k,l)$ (for simplicity we only consider the positive direction)
\begin{equation}\label{eq:adapt_bu}
    x_i=r\bx_i, \, w_{ij}=r\bw_{ij}, \, \ve=r\be,\, w_{kl}=r,\qquad ij\neq kl.
\end{equation}
\begin{remark}
    In this section, we only focus on the preservation of network structure and its interpretation. If more details are known from the particular functions in \eqref{eq:model_adap}, then a more appropriate choice of blow-up \eqref{eq:adapt_bu} can be made.
\end{remark}
The corresponding blown-up system reads as:
\begin{equation}\label{eq:ad_bu1}
    \begin{split}
        \dot r &= r\be \bar g_{kl}(r,\bx)\\
        \dot\bx_i &= \frac{\bar f_i(r,\bx_i)}{r}+\sum_{j=1}^N\bw_{ij}\bar h_{ij}(r,\bx_i,\bx_j)-\be\bx_i\bar g_{kl}(r,\bx),\qquad i\neq k\\
        \dot\bx_k &= \frac{\bar f_k(r,\bx_k)}{r}+\sum_{j=1,j\neq l}^N\bw_{kj}\bar h_{kj}(r,\bx_k,\bx_j)+\bar h_{kl}(r,\bx_k,\bx_l)-\be\bx_k\bar g_{kl}(r,\bx),\\
        \dot \bw_{ij} &= \be(\bar g_{ij}(r,\bx,\bar w_{ij})-\bar w_{ij}\bar g_{kl}(r,\bx)), \quad ij\neq kl\\
        \dot\be &=-\be^2\bar g_{kl}(r,\bx)
    \end{split}
\end{equation}

The dynamics restricted to the invariant subset $\left\{ \be=0 \right\}$ are often useful. One accordingly has that \eqref{eq:ad_bu1} reduces to
\begin{equation}\label{eq:ad_bu2}
    \begin{split}
        \dot r &= 0\\
        \dot\bx_i &= \frac{\bar f_i(r,\bx_i)}{r}+\sum_{j=1}^N\bw_{ij}\bar h_{ij}(r,\bx_i,\bx_j),\qquad i\neq k\\
        \dot\bx_k &= \frac{\bar f_k(r,\bx_k)}{r}+\sum_{j=1,j\neq l}^N\bw_{kj}\bar h_{kj}(r,\bx_k,\bx_j)+\bar h_{kl}(r,\bx_k,\bx_l),\\
        \dot \bw_{ij} &= 0, \quad ij\neq kl\\
    \end{split}
\end{equation}
which is the model of a static network, with weights $\bar w_{ij}$ and fixed weight $\bar w_{kl}=1$. If $\be$ remains small, then \eqref{eq:ad_bu1} can be interpreted as a small perturbation of the static model \eqref{eq:ad_bu2}. In this case, we notice that the weights $\bar w_{ij}$, $ij\neq kl$, would evolve slowly, while $\bar w_{kl}$ is still fixed to $1$. Moreover, we notice that the adaptation effects of $w_{kl}$ are ``globally transported'' to the dynamics of the nodes. This may have important applications to elucidate the influence of certain edges on the rest of the network. If \eqref{eq:ad_bu2} has been desingularized, then the perturbation \eqref{eq:ad_bu1} would be regular.

\section{Examples}\label{sec:examples}

In this section, we present several examples that highlight the use of the blow-up for network dynamical systems. In particular, we focus on network structure preservation and the possibility of desingularization after blow-up.

\subsection{Linearly Diffusive systems}\label{sec:diffusive}

Our first example concerns a set of diffusively coupled nodes with linear internal dynamics: 
\begin{equation}\label{eq:ex_diff_m}
    \dot x_i = a_ix_i+\sum_{j=1}^Nw_{ij}(x_j-x_i),
\end{equation}
with parameters $a_i\in\R$, $a_i\neq0$, and $w_{ij}\in\R$ such that the origin is nilpotent. For the moment, we focus on describing the flow near the origin \emph{with the parameters fixed}. 

To start, let us consider the case where $N=2$, that is:
\begin{equation}\label{eq:ex_diff_eq0}
    \begin{split}
        \dot x_1&=a_1x_1+w_{12}(x_2-x_1)\\
        \dot x_2&=a_2x_2+w_{21}(x_1-x_2).
    \end{split}
\end{equation}
For this, the nilpotency condition is given by the two simultaneous equations
\begin{equation}\label{eq:diff_nil}
\begin{split}
    a_1-w_{12}+a_2-w_{21} &=0\\
    a_1a_2-a_1w_{21}-a_2w_{12} &=0.
\end{split}
\end{equation}
Notice that for these equations to have a nontrivial solution it is necessary that $w_{12}\neq w_{21}$ and $a_1\neq a_2$ which is fulfilled, generically, by having (at least) two independent parameters. Assuming that $a_1,a_2$ are independent, we have that $w_{ij}=w_{ij}^*:=\frac{a_i^2}{a_i-a_j}$, makes the origin nilpotent.

Consider the node-directional blow-up\footnote{It can be easily checked that these two charts are sufficient as in the charts $K_2^\pm=\left\{ \bx_2=\pm1 \right\}$ one obtains analogous local vector fields.}
\begin{equation}
    x_1=\pm r, \; x_2= r\bx_2,
\end{equation}
where the $\pm$ sign accounts for the charts $K_1^\pm:\left\{ \bx_1=\pm1 \right\}$, and which leads to the local vector field
\begin{equation}
    \begin{split}
        r'&=r\left( a_1\pm w_{12}(\bx_2\mp 1)\right)\\
        \bx_2'&=\pm w_{21}+\bx_2(-a_1+a_2+w_{12}-w_{21}\mp w_{12}\bx_2),
    \end{split}
\end{equation}
whereby substituting $w_{ij}=w_{ij}^*=\frac{a_i^2}{a_i-a_j}$ we get
\begin{equation}\label{eq:ex_diff_bu1}
    \begin{split}
        r' &= -\frac{a_1r}{a_1-a_2}(a_2\mp a_1\bx_2)\\
        \bx_2' &=\mp\frac{(a_2\mp a_1\bx_2)^2}{a_1-a_2}.
    \end{split}
\end{equation}
The equilibrium point of \eqref{eq:ex_diff_bu1}, namely $(r,x_2)=(0,\pm\frac{a_2}{a_1})$, is still nilpotent, and in fact the linearization matrix at the equilibrium point is the zero matrix. However, as a slight advantage, \eqref{eq:ex_diff_bu1} is in triangular form and can be desingularized\footnote{The desingularized vector is found by dividing \eqref{eq:ex_diff_bu1} by $(a_2\mp a_1\bx_2)$, and provides the phase-portrait of \eqref{eq:ex_diff_bu1} away from $(a_2\mp a_1\bx_2)=0$ after reversing the direction of the flow in the region $(a_2\mp a_1\bx_2)<0$.} to find the classification shown in figure \ref{fig:ex_diff_class}.
 \begin{figure}[htbp]
     \centering
     \begin{tikzpicture}
         \node at (0,0){
\includegraphics[]{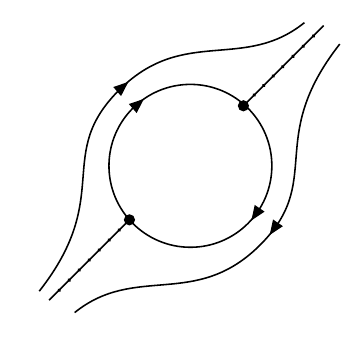}
         };
          \node at (7,0){
\includegraphics[]{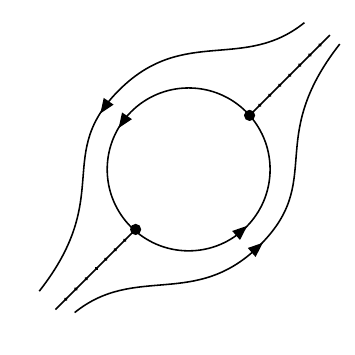}
         };
         \node at (0,-8.5){$\frac{a_1}{a_1-a_2}>0$};
         \node at (7,-8.5){$\frac{a_1}{a_1-a_2}<0$};
         \node at (0,-5){
         \includegraphics[]{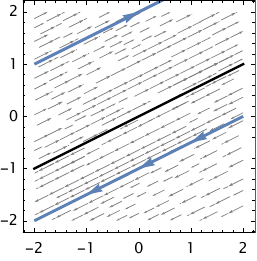}
         };
         \node at (7,-5){
         \includegraphics[]{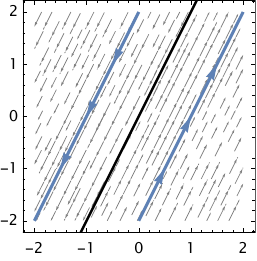}
         };
         \node at (0,-7.5){$x_1$};
         \node at (7,-7.5){$x_1$};
         \node at (-2.5,-5.0){$x_2$};
         \node at (4.5,-5.0){$x_2$};
     \end{tikzpicture}
     \caption{Classification of the flow of the blown-up system \eqref{eq:ex_diff_bu1}; left for $\frac{a_1}{a_1-a_2}>0$ and right for $\frac{a_1}{a_1-a_2}<0$. We show in the first row the global phase-portrait which is obtained by gluing together the flow on the charts $K_1^\pm$. The dotted line corresponds to the line of equilibria $x_2=\frac{a_2}{a_1}x_1$. The shown line is a representative of the case $a_1>0$, $a_2>0$. In the second row, we show a representative simulation of the original system \eqref{eq:ex_diff_eq0} with $w_{ij}=w_{ij}^*=\frac{a_i^2}{a_i-a_j}$. For these simulations we use $a_1=2$, $a_2=1$ on the left; $a_1=1$, $a_2=2$ on the right.}
     \label{fig:ex_diff_class}
 \end{figure}

We further emphasize that the equilibrium of  \eqref{eq:ex_diff_bu1} is as degenerate as it can get, in the sense that shifting the equilibrium point to the origin one gets (re-using the variables)
\begin{equation}\label{eq:ex1_diff_deg}
    \begin{split}
        r'&=\frac{a_1^2}{a_1-a_2}r\bx_2\\
        \bx_2'&=-\frac{a_1^2}{a_1-a_2}\bx_2^2,
    \end{split}
\end{equation}
which has linearization matrix $\begin{bmatrix}
    0 &0\\ 0 &0
\end{bmatrix}$, instead of the less degenerate form $\begin{bmatrix}
    0 &1\\ 0 &0
\end{bmatrix}$, which is mostly considered in the literature. Hence, for example, the techniques of \cite{menini2010stability} cannot be applied to achieve the stability of the origin via the introduction of higher-order terms. Of course, stabilization by modifying the weights, or equivalently by introducing new linear terms, is quite straightforward. If one were to consider weights
\begin{equation}
    w_{ij}=w_{ij}^*+\delta_{ij},
\end{equation}
in \eqref{eq:bu1r0}, one finds equilibria $(r^*,\bx_2^*)$ given by $r^*=0$ and
\begin{equation}
    \bx_2^*=\frac{\pm(a_2-a_1) \sqrt{4 a_1 \delta_{21}+4 a_2 \delta_{12}+(\delta_{12}+\delta_{21})^2}+2 a_1 a_2+a_1 \delta_{12}-a_1 \delta_{21}-a_2 \delta_{12}+a_2 \delta_{21}}{2 \left(a_1^2+a_1 \delta_{12}-a_2 \delta_{12}\right)}
\end{equation}
with corresponding Jacobian 
\begin{equation}
    J=\begin{bmatrix}
 \frac{1}{2} \left(\pm\sqrt{4 a_1 \delta_{21}+4 a_2 \delta_{12}+(\delta_{12}+\delta_{21})^2}-\delta_{12}-\delta_{21}\right) & 0 \\
 0 & \mp\sqrt{4 a_1 \delta_{21}+4 a_2 \delta_{12}+(\delta_{12}+\delta_{21})^2} \\
    \end{bmatrix}
\end{equation}
from which one can conclude, for example, that if $-\delta_{12}-\delta_{21}<0$ and $\delta_{}$ $4 a_1 \delta_{21}+4 a_2 \delta_{12}<0$ then there is one stable node and one saddle, and the flow on both $r$-eigenspaces is contracting. This then leads to a stable origin for \eqref{eq:ex_diff_eq0}, see figure \ref{fig:diff_st} for an example.
\begin{figure}[htbp]
    \centering
    \includegraphics{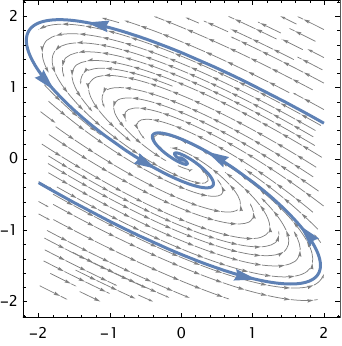}
    \caption{Phase-portrait of \eqref{eq:ex_diff_eq0} with $w_{ij}=w_{ij}^*+\delta_{ij}$ with $a_1=-2$, $a_2=1$, $\delta_{ij}=\frac{1}{5}$.}
    \label{fig:diff_st}
\end{figure}

It would be interesting to investigate, however, if higher-order interactions \cite{bick2023higher} can stabilize the origin. This is, in general, a very complicated problem and we limit ourselves to providing a proof of concept. Suppose one introduces higher-order terms as follows:
\begin{equation}\label{eq:ex_diff_eq0aux1}
    \begin{split}
        \dot x_1&=a_1x_1+w_{12}(x_2-x_1)-wx_2^3+w(x_1+x_2)^3\\
        \dot x_2&=a_2x_2+w_{21}(x_1-x_2)+wx_2^3,
    \end{split}
\end{equation}
where $w\in\R$. Upon a time-rescaling, one can assume that all parameters are small. To simplify the analysis, but keeping the problem still interesting, let $a_1=\alpha$ and $a_2=-\alpha$ with $\alpha>0$, hence the uncoupled dynamics are unstable. We emphasize that the origin is still nilpotent for \eqref{eq:ex_diff_eq0aux1} with $w_{ij}=w_{ij}^*$. Consider the polar blow-up
\begin{equation}
    (x,y)=(r\cos(\theta),r\sin(\theta))
\end{equation}
together with the rescaling of parameters $\alpha=r^3 A, w=r W$. In these new coordinates, and after division by $r^3$, we obtain the vector field
\begin{equation}\label{eq:app1}
    \begin{split}
        r'&= r f_1(\theta)\\
        \theta'&=f_2(\theta)
    \end{split}
\end{equation}
where, having substituted $w_{ij}=w_{ij}^*$, we have
\begin{equation}
    \begin{split}
        f_1(\theta) &=  \frac{1}{8} (4 A \cos (2 \theta )+W (6 \sin (2 \theta )+3 \sin (4 \theta )-\cos (4 \theta )+9))\\
        f_2(\theta) &= \frac{1}{8} (-2 (2 A+3 W) \sin (2 \theta )-4 A+W \sin (4 \theta )+3 W \cos (4 \theta )-3 W).
    \end{split}
\end{equation}
It follows that if we let\footnote{One can obtain the equilibria and their corresponding Jacobians numerically for any value of $A>0$ and $W<0$ leading to a similar qualitative behavior. It just happens that for this choice, the equilibria are independent of the parameters.} $W=-A$, then \eqref{eq:app1} has eight hyperbolic equilibria for $r=0$ given by 
\begin{equation}
    \{\theta_i\}\approx\{0.431808,  1.26918, 2.35619, 2.54775, 3.5734, 4.41077, 5.49779, 5.68935\},\;i=1,\ldots,8,
\end{equation}
where in particular $(r,\theta)=(0,\theta_k)$, $k=2,4,6,8$, are stable nodes and the rest are saddles with the $r$-direction stable. This leads to the diagram shown in figure \ref{fig:ex_hot_bu}. 

\begin{remark}
    Notice that thanks to the higher-order terms, the equilibria of the blown-up system are hyperbolic. That is, one can indeed desingularize a nilpotent equilibrium of a network dynamical system via the blow-up. 
\end{remark}

Due to hyperbolicity, a qualitatively similar result, as the one stated above, holds for $W\approx-A$. Moreover, since the result does not depend on the rescaling of the parameters, just on their signs, we conclude that the origin of \eqref{eq:ex_diff_eq0aux1} is stable for $a_1=\alpha$, $a_2=-\alpha$ with $\alpha>0$ and $w<0$, see a simulation in figure \ref{fig:ex_hot_bu}.
\begin{figure}
    \centering
    \begin{tikzpicture}
    \node at (3,7){
    \includegraphics[]{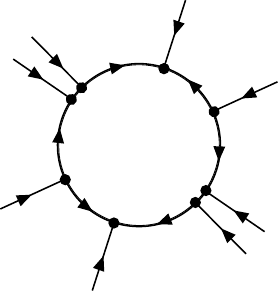}
    };
    \node at (3,4) {
        (a)
        };
    \node at (0,0) {
    \includegraphics[]{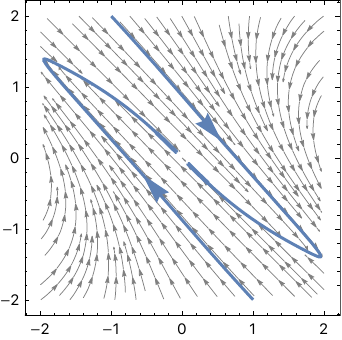}
    };
    
    \node at (0,-4){(b)};
    \node at (7,-4){(c)};
    \node at (7,0) {
    \includegraphics[]{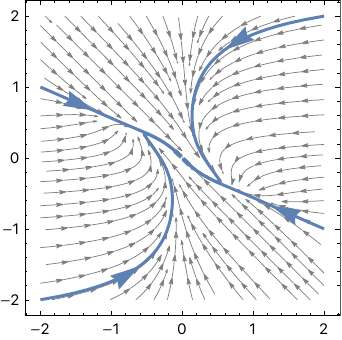}
    };
    \end{tikzpicture}
    \caption{(a) Global phase-portrait of \eqref{eq:app1}, where all equilibria are hyperbolic. Panels (b) and (c) show a simulation of \eqref{eq:ex_diff_eq0aux1} with $a_1=1$, $a_2=-1$, $w_{ij}=w_{ij}^*$, $w=-1/10$ and $w=-2$ respectively.}
    \label{fig:ex_hot_bu}
\end{figure}

The main conclusions of the previous analysis are summarized as follows.
\begin{proposition}
    Consider the 2-node network 
    \begin{equation}
    \begin{split}
        \dot x_1&=a_1x_1+w_{12}(x_2-x_1)-wx_2^3+w(x_1+x_2)^3\\
        \dot x_2&=a_2x_2+w_{21}(x_1-x_2)+wx_2^3,
    \end{split}
\end{equation}
with parameters satisfying \eqref{eq:diff_nil} so that the origin is nilpotent. If $w=0$ then the origin is unstable. On the other hand, if $a_1=-a_2=\alpha$ with $\alpha>0$ and $w<0$, then the origin is locally asymptotically stable.
\end{proposition}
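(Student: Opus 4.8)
The plan is to prove the two assertions by quite different means. When $w=0$ the system is \emph{linear}: it coincides with \eqref{eq:ex_diff_eq0}, i.e.\ $\dot x=Mx$, where by \eqref{eq:diff_nil} the matrix $M$ has vanishing trace and determinant and is therefore nilpotent. Since $a_i\neq0$, $M$ is not the zero matrix: its $(1,1)$ and $(1,2)$ entries are $a_1-w_{12}$ and $w_{12}$, whose simultaneous vanishing would force $a_1=0$. A nonzero nilpotent $2\times2$ matrix has rank one and is similar to the Jordan block of size two, so $M^2=0$ and $e^{tM}=I+tM$ is unbounded as $t\to+\infty$; scaling any vector with $Mv\neq0$ to norm $\delta$ gives, for every $\delta>0$, an initial condition whose forward orbit leaves every bounded set. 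Hence the origin is Lyapunov unstable. (The same can also be read off the maximally degenerate desingularized form \eqref{eq:ex1_diff_deg}.)

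For $a_1=-a_2=\alpha>0$, $w<0$ I would argue in three steps. First, by the coordinate-and-time rescaling indicated just before \eqref{eq:ex_diff_eq0aux1} — which alters the magnitudes but not the signs of $\alpha$ and $w$ — it suffices to treat $\alpha$ and $|w|$ arbitrarily small; note that \eqref{eq:diff_nil} then pins down $w_{12}=\alpha/2$, $w_{21}=-\alpha/2$, so only the ratio of the two small parameters remains free. Second, I apply the quasihomogeneous blow-up of section~\ref{sec:par_bu}: set $(x_1,x_2)=(r\cos\theta,r\sin\theta)$ with parameter rescaling $\alpha=r^3A$, $w=rW$ (weights $1$ on $x_1,x_2,w$ and weight $3$ on $\alpha$), substitute $w_{ij}=w_{ij}^*$, and divide by $r^3$; this is exactly the computation producing \eqref{eq:app1}, and by section~\ref{sec:main_bu} the ``internal dynamics plus interaction'' structure survives. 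What remains is to analyze \eqref{eq:app1} on the invariant set $\{r=0\}$, where it collapses to $\theta'=f_2(\theta)$ with the normal $r$-direction governed to leading order by $f_1(\theta)$.

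Third — and this is the heart of the matter — one studies the phase portrait of \eqref{eq:app1} on $\{r=0\}$. On the slice $W=-A$ the angular equilibria are parameter-independent, and a finite (tedious, and in the text numerically performed) inspection of the explicit $f_1,f_2$ shows that $\theta'=f_2(\theta)$ has exactly eight simple zeros $\theta_1<\cdots<\theta_8$, that $f_1(\theta_i)<0$ at all of them, and that $f_2'(\theta_i)<0$ for $i\in\{2,4,6,8\}$ while $f_2'(\theta_i)>0$ otherwise; crucially, this hyperbolicity is produced by the cubic terms in \eqref{eq:ex_diff_eq0aux1}, the linear system alone being as degenerate as in \eqref{eq:ex1_diff_deg}. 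Hence $(0,\theta_{2k})$ are stable nodes and $(0,\theta_{2k-1})$ are saddles with one-dimensional stable manifold along the radial line $\{\theta=\theta_{2k-1}\}$, the flow on $\{r=0\}$ is gradient-like with every $\omega$-limit set among the four sinks, and (cf.\ figure~\ref{fig:ex_hot_bu}(a)) $\{r=0\}$ together with this internal flow is an asymptotically stable invariant set of the blown-up field. Blowing down — a diffeomorphism for $r>0$ — then yields local asymptotic stability of the origin for small $\alpha,|w|$; hyperbolicity of the eight equilibria makes the portrait persist for $W/A$ near $-1$, and, as asserted in the footnote to the construction, for all $A>0$, $W<0$, so together with the first step this gives the claim for every $\alpha>0$, $w<0$.

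I expect the main obstacle to be precisely this last blow-down. Since $f_1$ is \emph{not} sign-definite along $\{r=0\}$ (on some arcs $r$ transiently grows), a naive cylindrical neighborhood $\{0<r<\delta\}$ is neither invariant nor sufficient to deduce Lyapunov stability, so one must exploit the \emph{global} structure on $\{r=0\}$ — for instance a Lyapunov function adapted to the blow-up, or a compactness/$\limsup$ argument on the blown-up space — to upgrade ``gradient-like toward hyperbolic sinks, with the $r$-direction attracting at each sink'' into honest asymptotic stability of the origin. A lesser point is that the enumeration of the eight equilibria and the signs of the Jacobian entries is numerical and, for a fully rigorous proof, should be certified (e.g.\ by interval arithmetic).
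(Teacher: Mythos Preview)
Your proposal is correct and, for the second assertion, follows the paper's argument essentially verbatim: polar blow-up with parameter weights $(1,1,3,1)$, reduction to \eqref{eq:app1}, numerical inspection of the eight equilibria on $\{r=0\}$ for $W=-A$, persistence by hyperbolicity, and the scaling observation that only the signs of $\alpha,w$ matter. The caveats you flag --- that $f_1$ is not sign-definite so the blow-down to genuine Lyapunov stability needs a global argument on the circle, and that the equilibrium count is numerical --- are accurate and apply equally to the paper, which treats the proposition as a summary of the preceding (partly numerical) analysis rather than giving a fully certified proof.

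Where you differ is the first assertion. The paper establishes instability for $w=0$ via the node-directional blow-up: it derives \eqref{eq:ex_diff_bu1}, observes that the equilibrium is still nilpotent with zero linearization, and then desingularizes by dividing out the factor $(a_2\mp a_1\bar x_2)$ to obtain the phase portraits of figure~\ref{fig:ex_diff_class}, from which instability is read off. Your argument bypasses this entirely: since the system is linear with a nonzero nilpotent matrix $M$, one has $e^{tM}=I+tM$ unbounded, hence instability. This is shorter and fully rigorous; the paper's route, while heavier, is there to illustrate the blow-up machinery (and the classification in figure~\ref{fig:ex_diff_class}) rather than because it is needed for the bare instability claim. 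Both are valid.

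One small correction: the time-rescaling remark you cite appears just \emph{after} \eqref{eq:ex_diff_eq0aux1}, not before; and to reduce arbitrary $(\alpha,w)$ with $\alpha>0$, $w<0$ to small parameters you need the \emph{combination} of time rescaling and spatial rescaling $x\mapsto \mu x$ (the latter multiplies $w$ by $\mu^2$), since time rescaling alone preserves the ratio $\alpha/w$. You do say ``coordinate-and-time rescaling'', so this is at worst an imprecision.
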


The general case \eqref{eq:ex_diff_m} quickly becomes analytically intractable. For example, it is quite difficult to find closed expressions for the parameters such that the origin is nilpotent. Nevertheless, we conjecture that the dynamics shall be reminiscent of what is shown in figure \ref{fig:ex_diff_class}, and that as for the case $N=2$, higher-order interactions may stabilize nilpotent points. A special case that is simple to analyze is considered in the following proposition.

\begin{proposition}\label{prop:gradient} Consider
    \begin{equation}
    \dot x_i = a_ix_i+\sum_{j=1}^Nw_{ij}(x_j-x_i) + h_{i}(x),
\end{equation}
with $h_i(0)=0$, $h_{i}\in\cO(|x|^2)$, and with parameters $a_i\in\R$, $a_i\neq0$, and $w_{ij}\in\R$ such that the origin is nilpotent. Suppose that there exists a smooth function $V:\R^n\to\R$ such that $h(x)=(h_1(x),\ldots,h_n(x))=-\nabla V(x)$ and $x=0$ is a (degenerate, since $h_{i}\in\cO(|x|^2)$) local minimum of $V$. Then the origin is locally asymptotically stable.
\end{proposition}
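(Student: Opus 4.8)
Write the system in the form of \eqref{eq:sys3}, i.e.\ $\dot x = Mx - \nabla V(x)$, where $M$ is the (nilpotent, by hypothesis) linearization at the origin — of the shape $M = D - L$ with $D = \diag\{a_i\}$ and $L$ the weighted directed Laplacian of the $w_{ij}$ — and $\nabla V = -h \in \cO(|x|^2)$. The plan is to desingularize the nilpotent origin by the blow-up of section~\ref{sec:bu} and to read off asymptotic stability from the desingularized picture, using the hypothesis on $V$ only to control the transverse ($r$-)direction. As a preliminary, I would record the structural consequences of $0$ being a (strict) local minimum of $V$ with vanishing Hessian: the lowest-order homogeneous part $V_m$ of $V$ has even degree $m\geq 4$, is nonnegative, and $V$ is positive along every ray through the origin near $0$.

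Next I would perform the spherical blow-up $x = r\bx$ on $[0,\varepsilon)\times\S^{N-1}$ and divide by $r$. The resulting field leaves $\{r=0\}$ invariant, where it reduces to the flow $\bx' = M\bx - (\bx^\top M\bx)\bx$ induced by $M$ on the sphere, while the transverse equation reads $r' = r\,(\bx^\top M\bx) - m\,r^{m-1}V_m(\bx)+\cO(r^{m})$. Because $M$ is nilpotent, the sphere flow has no recurrence beyond fixed points: $e^{Mt}\bx/|e^{Mt}\bx|$ converges as $t\to\pm\infty$ to a unit vector in $\ker M$, so every limit set of the sphere flow lies in the finite set $\Sigma := \{\pm\bar v : \bar v\in\ker M,\ |\bar v|=1\}$, on which $\bx^\top M\bx=0$.

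The core of the argument is the local analysis near each $\bar v\in\Sigma$. For small $r>0$ the transverse equation there is $r' = -m\,r^{m-1}V_m(\bar v) + (\text{h.o.t.})$, which is strictly negative as soon as $V_m(\bar v)>0$; if $V_m$ happens to vanish along $\bar v$, one descends to the first non-vanishing homogeneous part of $V$ in that direction, which by the minimality hypothesis is positive and of even degree, and which again forces $r'<0$ for $r>0$ small (resolving, if necessary, the coincidental vanishing of $\bx^\top M\bx$ near $\bar v$ by a further quasihomogeneous blow-up in the $r$-direction, exactly as done in the examples of section~\ref{sec:examples}). Combining this with the previous paragraph — the sphere flow funnels $\bx(t)$ into an arbitrarily small neighbourhood of $\Sigma$ and keeps it there, so that $r$ grows at most by a bounded factor during the bounded ``transit time'' spent away from $\Sigma$ and then strictly decreases — one concludes that every orbit of the blown-up field with $0<r_0<\varepsilon$ small has $r(t)\to 0$ while $\bx(t)$ stays on the compact sphere. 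Blowing down (the blow-up is a diffeomorphism for $r>0$ and collapses $\{r=0\}$ to the origin) this says precisely that $x(t)\to 0$, and uniformity over the sphere upgrades this to Lyapunov stability. Hence the origin is locally asymptotically stable.

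\textbf{Main obstacle.} A single blow-up does \emph{not} completely desingularize here: since the nilpotent $M$ already governs the exceptional divisor $\{r=0\}$ by itself, this set is not normally hyperbolic and one cannot merely invoke ``regular perturbation of the $r=0$ dynamics.'' The delicate point is therefore the bookkeeping of $r$ along orbits of the sphere flow — $r$ may grow transiently where $\bx^\top M\bx>0$ and contracts only algebraically near $\Sigma$ — so that the claim $r(t)\to 0$ requires either a genuine Lyapunov function for the blown-up system, or one (or more) further blow-ups in the $r$-direction near the $\ker M$-equilibria together with a chain-recurrence/limit-set argument handling the heteroclinic structure of the sphere flow. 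It is exactly at this stage that the positivity of $V$ near the origin enters, and converting that qualitative fact into the quantitative sign condition $r'<0$ — uniformly enough to beat the transient growth — is the step that must be carried out carefully. Note also that the statement, as written, presumably needs the local minimum to be isolated/strict; otherwise (e.g.\ $V$ vanishing along a ray that meets $\ker M$) asymptotic stability can fail.
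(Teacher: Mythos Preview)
Your approach is \emph{not} the route the paper takes. The paper gives a two--line Lyapunov argument with no blow-up at all: it asserts that the full system can be written as a gradient flow $\dot x=-\nabla W$, where the quadratic part of $W$ is built from the coefficients $a_i,w_{ij}$ and the higher-order part is $V$; nilpotency of the linearization is then invoked to say that the origin is a (degenerate) local minimum of $W$, and asymptotic stability follows. So the paper's proof is pure potential theory---no sphere, no exceptional divisor, no transit bookkeeping. (One may note that writing the linear part as a gradient forces it to be symmetric, and a symmetric nilpotent matrix is zero; so the paper's argument, read literally, covers only the case where the linear part vanishes. Your instinct that the directed case requires more is therefore well founded.)

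Your blow-up plan, while closer to the spirit of the rest of the paper, has a concrete gap that you flag but understate. The sentence ``$r$ grows at most by a bounded factor during the bounded transit time spent away from $\Sigma$'' is false: for a nilpotent $M$ with a nontrivial Jordan block the projected sphere flow $\bx(t)=e^{Mt}\bx_0/|e^{Mt}\bx_0|$ converges to $\Sigma\subset\ker M$ only \emph{algebraically} (rate $1/t$ for a $2\times2$ block), so the transit time is infinite, and one checks directly that
\[
\int_0^\infty \bx(s)^\top M\,\bx(s)\,ds \;=\; \tfrac12\int_0^\infty \frac{d}{ds}\ln\bigl|e^{Ms}\bx_0\bigr|^2\,ds \;=\; +\infty
\]
whenever $M\bx_0\neq0$. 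Thus the leading term $r'=r\,\bx^\top M\bx$ alone would drive $r\to\infty$ along the sphere orbit; stability must come from a genuine competition between this linear growth and the higher-order term $-m\,r^{m-1}V_m(\bx)$, and that competition cannot be resolved by the ``first transit, then contraction near $\Sigma$'' splitting you propose. A correct blow-up argument would need either a Lyapunov function on the blown-up space that couples $r$ and $\bx$ (mirroring, in effect, the paper's use of $W$), or an iterated quasihomogeneous blow-up near each point of $\Sigma$ with weights adapted to the Jordan structure of $M$---neither of which your plan supplies.
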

\begin{proof}
    System \eqref{eq:ex_diff_m} can be rewritten as the gradient system $\dot x=-\nabla W$ where $W=(W_1,\ldots,W_n)$ with $W_i=-(a_i-w_{ii})x_i^2-\sum_{i=1,\,i\neq j}^nw_{ij}x_ix_j + V$. The statement is now straightforward from noticing that the nilpotent assumption guarantees that the origin is a (degenerate) local minimum of $W$.
\end{proof}

\begin{remark}
    Proposition \ref{prop:gradient} concerns network dynamical systems for which the leading part is nilpotent and the higher-order terms are of gradient form. The proposition roughly tells us that near the nilpotent point, the system behaves as a gradient system, hence the higher-order terms, if chosen appropriately, stabilize the nilpotent origin. Naturally, many network dynamical systems do not admit a gradient structure. In particular, one can check that \eqref{eq:ex_diff_eq0aux1} is not of such a gradient form.
\end{remark}

\subsection{An example of nilpotent internal dynamics}\label{sec:example_nilpotent_internal}

Consider the following system of weakly and diffusely coupled scalar nodes\footnote{This model can, alternatively, be studied with the tools developed in \cite{jardon2023persistent}. Here we present an analysis fitting the blow-up method.}
\begin{equation}\label{eq:ex_nil}
    \begin{split}
        \dot x_i &= a_ix_i^3(1-x_i)+\ve\sum_{j=1}^Nw_{ij}(x_j-x_i),
    \end{split}
\end{equation}
where $a_i\neq0$, $w_{ij}\in\R$, and for simplicity we assume that $x_i(0)\in(0,1)$, and that $w_{ii}=0$ for all $i=1,\ldots,N$ (there are no self-couplings). Since for $\ve=0$ the origin is nilpotent, we are going to use the blow-up technique to elucidate the stability of the origin for $\ve$ small.

A node directional blow-up given by
\begin{equation}
    (r,r\bx_1,\ldots,r\bx_{i-1},r\bx_{i+1},\ldots,r\bx_N,r^2\be)=(x_i,x_1,\ldots,x_{i-1},x_{i+1},\ldots,x_N,\ve)
\end{equation}
leads, after desingularization, to
\begin{equation}
    \begin{split}
        r' &= r\left( a_i+\be\sum_{j=1}^Nw_{ij}(\bx_j-1)-a_ir\right)\\
        \bx_k' &=a_k\bx_k^3+\be\sum_{j\neq i}^Nw_{kj}(\bx_j-\bx_k)+\be w_{ki}(1-\bx_k)-\bx_k\left( a_i+\be\sum_{j=1}^Nw_{ij}(\bx_j-1)-a_ir\right)-ra_k\bx_k^4\\
        \be' &=-2\be\left( a_i+\be\sum_{j=1}^Nw_{ij}(\bx_j-1)-a_ir\right),
    \end{split}
\end{equation}
$k=1,\ldots,N$ with $k\neq i$. In this model $x_i$ is replaced by $r$, hence one is interested in the stability of $r=0$, which we recall corresponds to the origin via the blow-up transformation. 

As is usual in the blow-up analysis, one is interested in the dynamics restricted to the invariant subspaces $\left\{ r=\be=0\right\}$, $\left\{\be=0\right\}$ and $\left\{ r=0\right\}$ (although in this case, it suffices to take the last two subspaces). For $\be=0$ we have
\begin{equation}
    \begin{split}
        r' &= a_ir(1-r)\\
        \bx_k' &=-a_i\bx_k+a_k\bx_k^3+r(a_i\bx_k-a_k\bx_k^4)\\
        \be' &=0,
    \end{split}
\end{equation}
where it is straightforward to see, from the leading order terms, that if $a_l<0$ for all $l=1,\ldots,N$ we have that, locally, $(r,\bx_k)=(0,0)$ is a \emph{hyperbolic saddle} while $(r,\bx_k)=(0,\pm\sqrt{a_i/a_k})$ are \emph{hyperbolic} sinks, with Jacobians
\begin{equation}
    J_{(0,0)}=\begin{bmatrix}
        a_i & 0 \\ 0 & -a_i
    \end{bmatrix}\qquad\textnormal{and}\qquad J_{\left(0,\pm\sqrt{a_i/a_k}\right)}=\begin{bmatrix}
        a_i & 0 \\ * & 2a_i
    \end{bmatrix}
\end{equation}
respectively.  It will be useful for our arguments below to notice that these stability properties depend only on the $a_i$ coefficient. 

Since the above equilibria are hyperbolic after the blow-up, one can conclude that for $\be>0$ small, the local stability properties of the equilibria are preserved. 

In the $\be$-direction (the rescalling chart) we obtain the desingularized system
\begin{equation}
    \begin{split}
        \dot \bx_i &= a_i\bx_i^3(1-\bx_i)+\sum_{j=1}^Nw_{ij}(\bx_j-\bx_i),
    \end{split}
\end{equation}
The leading order term is now the interaction. Therefore, the origin is not nilpotent anymore, but semi-hyperbolic. It follows that if all the weights $w_{ij}$ are nonnegative, and the digraph is strongly connected, then the subspace $\textnormal{span}\left\{1,\ldots,1 \right\}\in\R^N$ is locally attracting. The dynamics on such a space are thus given by the scalar equation
\begin{equation}
    \bx = a_i\bx^3(1-\bx),
\end{equation}
for which the origin is locally asymptotically stable provided that $a_i<0$.

From our previous analysis, we have proven the following:  
\begin{proposition}
    If $a_i<0$, $w_{ij}\leq0$ for all $i,j=1,\ldots,N$ and the underlying digraph of \eqref{eq:ex_nil} is strongly connected, then the (nilpotent) origin of \eqref{eq:ex_nil} is locally asymptotically stable for $\ve>0$ sufficiently small.
\end{proposition}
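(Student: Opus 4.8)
The plan is to assemble the conclusion from the chart-by-chart analysis of the node-directional blow-up carried out above for \eqref{eq:ex_nil}, and then to combine the charts by a trapping argument. Recall that under the blow-up $(r,r\bx_1,\dots,r\bx_{i-1},r\bx_{i+1},\dots,r\bx_N,r^2\be)=(x_i,x_1,\dots,x_N,\ve)$ the nilpotent origin at $\ve=0$ is replaced by the exceptional divisor $\{r=0\}$, that $r$ controls the distance to the origin, and that $\be=\ve/r^{2}$, so that in the rescaling chart $Q=\{\be=1\}$ one has the fixed small value $r=\sqrt{\ve}$ and there $\bx\to0$ already forces $x=r\bx\to0$. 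Local asymptotic stability of the origin of \eqref{eq:ex_nil} for $0<\ve$ small thus amounts to showing that every trajectory of the blown-up field starting with $r$ small and with $\be$ corresponding to a fixed small $\ve>0$ has $x=r\bx\to0$.

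First I would record the dynamics on the invariant set $\{\be=0\}$, i.e.\ the case $\ve=0$. There the desingularized system decouples into $r'=a_ir(1-r)$ and $\bx_k'=-a_i\bx_k+a_k\bx_k^{3}+\cO(r)$, so when $a_\ell<0$ for all $\ell$ the divisor $\{r=0\}$ is attracting in the $r$-direction, the origin of the $\bx$-subsystem is a hyperbolic saddle, and the fully nonzero equilibria $\bx_k=\pm\sqrt{a_i/a_k}$ (real since $a_i/a_k>0$) are hyperbolic sinks, with the Jacobians displayed above. Since every equilibrium occurring here is hyperbolic, these features persist for $0<\be$ small. Moreover, in each node chart $K_i^\pm$, for $r$ and $\be$ small one computes $\be'=-2a_i\be+\cO(\be^{2},\be r)>0$; hence, on the relevant bounded region, the flow contracts $r$ uniformly ($r'\le -c\,r$ with $c>0$) while pushing $\be$ monotonically away from $\{\be=0\}$ and towards the rescaling chart.

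Next I would analyze the rescaling chart $Q$, where $r'=0$ and the desingularized field is $\dot\bx_i=a_i\bx_i^{3}(1-\bx_i)+\sum_{j=1}^{N}w_{ij}(\bx_j-\bx_i)$. Its linearization at $\bx=0$ is the interaction matrix $\bx\mapsto(\sum_j w_{ij}(\bx_j-\bx_i))_i$, which by the assumptions on the weights and strong connectivity has a simple zero eigenvalue with eigenvector $(1,\dots,1)^{\top}$ and all remaining eigenvalues with negative real part; hence $\bx=0$ is semi-hyperbolic with a one-dimensional attracting center manifold tangent to $\textnormal{span}\{(1,\dots,1)\}$. On that center manifold the reduced scalar equation for the center coordinate $\bar s$ has leading term $c\,\bar s^{3}$ with $c<0$ (a positive combination of the negative numbers $a_i$), so $\bar s=0$ is locally asymptotically stable on it, at an algebraic rate; by the reduction principle for semi-hyperbolic equilibria, $\bx=0$ is then locally asymptotically stable in $Q$, and since there $r=\sqrt{\ve}$ is fixed, $x=\sqrt{\ve}\,\bx\to0$ along these trajectories.

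Finally I would glue the charts. For $\ve>0$ small, a trajectory of \eqref{eq:ex_nil} starting near the origin corresponds in the blow-up to one with $r$ small; while it stays in a node chart, $r$ strictly decreases and $\be$ strictly increases, so after a finite time it enters a fixed region of $Q$, where the previous paragraph forces $\bx\to0$; since $r$ never grows in $Q$ and only shrank in the node charts, $x=r\bx$ stays small throughout and tends to $0$. Packaging this into a forward-invariant neighborhood of the attracting part of the exceptional divisor yields the claim. The hard part is the non-hyperbolic equilibrium $\bx=0$ in the rescaling chart --- justifying the center-manifold reduction and checking that the degenerate reduced flow is indeed attracting --- together with the transition bookkeeping between the node charts and $Q$: one must verify that no trajectory escapes the blown-up neighborhood, that the basin of $\bx=0$ in $Q$ is large enough to catch the incoming trajectories (here the internal dynamics with $a_i<0$ and the consensus structure of the coupling are what is used), and that the equilibria sitting on $\{\be=0\}$, which exist only at $\ve=0$, do not trap trajectories once $\ve>0$.
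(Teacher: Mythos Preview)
Your proposal follows essentially the same route as the paper: the same node-directional blow-up, the same identification of the hyperbolic equilibria on $\{\be=0\}$ (saddle at $\bx_k=0$, sinks at $\bx_k=\pm\sqrt{a_i/a_k}$), and the same semi-hyperbolic analysis in the rescaling chart via reduction to the consensus direction. The paper's proof is in fact just the preceding computation capped by ``from our previous analysis, we have proven the following'', so you have reproduced it and then gone further: you supply the chart-transition (trapping) argument that the paper leaves entirely implicit, and your center-manifold statement in $Q$ (leading cubic coefficient a positive combination of the $a_i$) is more careful than the paper's reduced equation $\dot\bx=a_i\bx^3(1-\bx)$, which tacitly assumes the $a_i$ coincide. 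One caveat worth flagging is the sign of the weights: the proposition is stated with $w_{ij}\le 0$, but both the paper's argument and yours use that the linearized coupling has spectrum in the closed left half-plane with simple kernel $\mathrm{span}\{(1,\dots,1)\}$, which is the standard Laplacian conclusion for \emph{nonnegative} weights on a strongly connected digraph (indeed the paper's own text says ``if all the weights $w_{ij}$ are nonnegative''); this looks like a typo in the hypothesis rather than a flaw in your reasoning.
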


\subsection{A slowly adaptive network}\label{sec:example_adaptive}

Let us consider an adaptive network of $N$ homogeneous Kuramoto oscillators given by \cite{berner2018multi}
\begin{equation}
    \begin{split}
        \dot\phi_i &= \omega-\frac{1}{N}\sum_{j=1}^N\kappa_{ij}\sin(\phi_i-\phi_j+\alpha)\\
        \dot\kappa_{ij} &=-\ve(\sin(\phi_i-\phi_j+\beta)+\kappa_{ij}).
    \end{split}
\end{equation}

Without loss of generality, one may consider $\omega=0$ by changing to the co-rotating frame $\phi_i\mapsto\phi_i+\omega t$. Thus, from now on we consider
\begin{equation}\label{eq:ex_k_1}
    \begin{split}
        \dot\phi_i &= -\frac{1}{N}\sum_{j=1}^N\kappa_{ij}\sin(\phi_i-\phi_j+\alpha)\\
        \dot\kappa_{ij} &=-\ve(\sin(\phi_i-\phi_j+\beta)+\kappa_{ij}).
    \end{split}
\end{equation}

A one-cluster solution\footnote{Even though the one-cluster solutions for this system have already been described in \cite{berner2018multi}, we use it as a simple example to validate the application of the blow-up technique.} is defined as $\phi_i(t)=s(t)+a_i$ for some constant $a_i\in[0,2\pi)$, $i=1\ldots,N$. Depending on the phases $a_i$ the aforementioned one-cluster solutions receive different names, see \cite[Definition 2.3]{berner2018multi}. Upon substitution of the one-cluster solution in \eqref{eq:ex_k_1} one finds that $\dot s=$constant, meaning that one-cluster solutions are, in fact, of the form $\phi_i(t)=\Omega t+a_i$, with some $\Omega\in\R$. It is well-known that the one-cluster solutions with $\Omega=\Omega^*:=\frac{1}{N}\sum_{j}\sin(a_i-a_j+\alpha)\sin(a_i-a_j+\beta)$, together with $k_{ij}(t)=-\sin(a_i-a_j+\beta)$ form a set of relative equilibria of \eqref{eq:ex_k_1}. In other words, consider the co-rotating coordinates $\psi_i=\phi_i-(\Omega^* t+a_i)$, and the shifted weights $\sigma_{ij}=\kappa_{ij}+\sin(a_i-a_j+\beta)$. Then \eqref{eq:ex_k_1} is re-written as
\begin{equation}\label{eq:ex_k_2}
\begin{split}
    \dot\psi_i&=  -\Omega^*+\frac{1}{N}\sum_{j=1}^N\sin(a_i-a_j+\beta)\sin(\psi_i+a_i-\psi_j-a_j+\alpha)\\
    &\quad
    -\frac{1}{N}\sum_{j=1}^N\sigma_{ij}\sin(\psi_i+a_i-\psi_j-a_j+\alpha) \\
    \dot\sigma_{ij} &=-\ve(\sin(\psi_i+a_i-\psi_j-a_j+\beta)-\sin(a_i-a_j+\beta)+\sigma_{ij})
\end{split}
\end{equation}
It follows, indeed, that $(\psi_i^*,\sigma_{ij}^*)=(0,0)$ is an equilibrium of \eqref{eq:ex_k_2}. It turns out that this equilibrium is non-hyperbolic from a slow-fast perspective. Moreover, for certain parameters, one-cluster solutions can be nilpotent.
\begin{proposition}
    Consider the layer equation of \eqref{eq:ex_k_2}. Then the equilibrium point $(\psi_i^*,\sigma_{ij}^*)=(0,0)$ is non-hyperbolic. In particular, the antipodal solution, which corresponds to $a_i\in\left\{0,\pi\right\}$, is nilpotent for $\beta=0$. 
\end{proposition}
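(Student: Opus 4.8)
The plan is to first write down the layer equation of \eqref{eq:ex_k_2}, i.e., the fast subsystem obtained by setting $\ve=0$; this is $\dot\sigma_{ij}=0$ together with
\[
    \dot\psi_i = -\Omega^* + \frac{1}{N}\sum_{j=1}^N\left(\sin(a_i-a_j+\beta)-\sigma_{ij}\right)\sin(\psi_i-\psi_j+a_i-a_j+\alpha),\qquad i=1,\ldots,N.
\]
I would begin by confirming that $(\psi^*,\sigma^*)=(0,0)$ is an equilibrium: substituting $\psi=0$, $\sigma=0$ the right-hand side of the $\psi_i$-equation collapses to $-\Omega^* + \frac1N\sum_j\sin(a_i-a_j+\beta)\sin(a_i-a_j+\alpha)$, which vanishes precisely by the definition of $\Omega^*$.

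For the non-hyperbolicity statement I would compute the Jacobian of the layer vector field at $(0,0)$ in the block form induced by the splitting into $\psi$- and $\sigma$-coordinates. Since $\dot\sigma_{ij}\equiv0$ in the layer problem, the rows of the Jacobian belonging to the $\sigma$-variables are identically zero, so the Jacobian has the shape $\begin{bmatrix} J_{\psi\psi} & J_{\psi\sigma}\\ 0 & 0 \end{bmatrix}$; in particular it carries at least as many zero eigenvalues as there are shifted weights $\sigma_{ij}$, and hence $(0,0)$ is non-hyperbolic for every choice of cluster phases $a_i$. From the slow-fast viewpoint this is just the statement that $(0,0)$ lies on the critical manifold.

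To upgrade this to nilpotency in the antipodal case with $\beta=0$, I would show $J_{\psi\psi}=0$. Differentiating the displayed $\psi_i$-equation and evaluating at $(0,0)$ — where all $\sigma_{ij}=0$, so the $\sigma$-term contributes nothing — gives
\[
    \frac{\partial\dot\psi_i}{\partial\psi_k}\bigg|_{(0,0)} = \frac1N\sum_{j=1}^N\sin(a_i-a_j+\beta)\cos(a_i-a_j+\alpha)\left(\delta_{ik}-\delta_{jk}\right).
\]
Now for $a_i,a_j\in\{0,\pi\}$ one has $a_i-a_j\in\{-\pi,0,\pi\}$, so with $\beta=0$ every factor $\sin(a_i-a_j+\beta)=\sin(a_i-a_j)$ vanishes; therefore $J_{\psi\psi}=0$. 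Along the way one records that for this configuration $\kappa_{ij}^*=-\sin(a_i-a_j)=0$ and $\Omega^*=0$ as well, consistent with the introductory example. The full layer Jacobian is then $\begin{bmatrix} 0 & J_{\psi\sigma}\\ 0 & 0\end{bmatrix}$ (with $J_{\psi\sigma}$ generically nonzero, namely $\partial\dot\psi_i/\partial\sigma_{kl}|_{(0,0)}=-\tfrac1N\delta_{ik}\sin(a_i-a_l+\alpha)$), which squares to $0$ and is therefore nilpotent, proving the claim.

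I do not anticipate a genuine obstacle: the heart of the argument is the elementary identity $\sin(m\pi)=0$ applied to the linearization, and the only things to keep track of carefully are the index bookkeeping in the derivative $\partial\dot\psi_i/\partial\psi_k$ (the two Kronecker deltas arising from the difference $\psi_i-\psi_j$) and the observation that the surviving block $J_{\psi\sigma}$ cannot destroy nilpotency because it multiplies with the two zero blocks that are forced by $\dot\sigma\equiv0$.
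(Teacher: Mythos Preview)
Your nilpotency argument for the antipodal case with $\beta=0$ is exactly the paper's: both compute the $\psi\psi$-block of the linearization and observe that every entry carries a factor $\sin(a_i-a_j+\beta)$, which vanishes because $a_i-a_j\in\{-\pi,0,\pi\}$ and $\beta=0$. Your additional remark that the full block matrix $\begin{bmatrix}0 & J_{\psi\sigma}\\ 0 & 0\end{bmatrix}$ squares to zero is a nice supplement, but the paper stops at $J_{\psi\psi}=0$.

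Where you and the paper diverge is in the general non-hyperbolicity claim. You argue via the full layer Jacobian in $(\psi,\sigma)$-space: since $\dot\sigma\equiv0$ when $\ve=0$, the bottom rows are zero and the matrix is automatically singular. That is correct but tautological --- by this reasoning \emph{every} point of \emph{any} critical manifold is ``non-hyperbolic'', so it does not capture what the proposition is after. The paper instead restricts to the $N\times N$ fast block $J_{\psi\psi}$ (the object governing normal hyperbolicity of the critical manifold) and shows it has a nontrivial kernel for \emph{all} one-cluster phases $a_i$: the row sums vanish,
\[
J_{ii}+\sum_{j\neq i}J_{ij}=0,
\]
so $(1,\ldots,1)^\top\in\ker J_{\psi\psi}$, a Laplacian-type observation. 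This is the substantive slow-fast statement (loss of normal hyperbolicity), and it is what the text preceding the proposition announces. Your proof would benefit from including this row-sum argument rather than relying on the frozen $\sigma$-rows.
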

\begin{remark}
    These are not the only nilpotent solutions. In fact, for $\alpha=\beta=0$ all one-cluster solutions in \cite[Corollary 4.3]{berner2018multi} are nilpotent. In this example, however, we only concentrate on the antipodal case.
\end{remark}
\begin{proof}
    Since we are looking at the layer equation of \eqref{eq:ex_k_2}, we only need to consider the Jacobian $J=[J_{ij}]_{i,j=1,\ldots,N}$ where $J_{ij}=\frac{\partial\dot\psi_i}{\partial\psi_j}\Big|_{\psi_i=\psi_j=\sigma_{ij}=0}$. So, we have
    \begin{equation}
        \begin{split}
            J_{ii} &= \frac{1}{N}\sum_{j=1}^N\sin(a_i-a_j+\beta)\cos(a_i-a_j+\alpha)\\
            J_{ij} &=\frac{1}{N}\sin(a_i-a_j+\beta)\cos(a_i-a_j+\alpha).
        \end{split}
    \end{equation}
    Since $J_{ii}+\sum_{j\neq i}^N J_{ij}=0$, it follows that $\ker J$ is non-trivial (this is reminiscent, of course, of Laplacian matrices), showing that the equilibrium point $(\psi_i^*,\sigma_{ij}^*)=(0,0)$ is non-hyperbolic. Finally, if $\beta=0$ and $a_{i}\in\left\{ 0,\pi\right\}$ we have that $J=0$, i.e., the equilibrium point $(\psi_i^*,\sigma_{ij}^*)=(0,0)$ is nilpotent.
\end{proof}

The question one would now like to answer is: what is the stability of the anti-podal solutions for $\ve,\,\beta$ small? To answer this question we are going to use the blow-up technique. We remark that the blow-up technique is a local tool, hence, it is in principle applied to polynomial vector fields. Here we shall simply expand \eqref{eq:ex_k_2} for $(\psi_i,\sigma_{ij})$ close to the origin. Let $a_i\in\left\{ 0,\pi\right\}$. Up to a constant shift above, we can assume without loss of generality that $a_i=0$ for all $i=1,\ldots,N$. Then \eqref{eq:ex_k_2} reads as
\begin{equation}\label{eq:ex_k_3}
\begin{split}
    \dot\psi_i&=  -\Omega^*+\frac{1}{N}\sum_{j=1}^N\sin\beta\sin(\psi_i-\psi_j+\alpha)
    -\frac{1}{N}\sum_{j=1}^N\sigma_{ij}\sin(\psi_i-\psi_j+\alpha) \\
    \dot\sigma_{ij} &=-\ve(\sin(\psi_i-\psi_j+\beta)-\sin\beta+\sigma_{ij}).
\end{split}
\end{equation}
Expanding $\sin(\psi_i-\psi_j+\rho)$ for $\psi_i\approx \psi_j$, i.e. ($\sin(\psi_i-\psi_j+\rho)=\sin\rho+\cos\rho(\psi_i-\psi_j)+\cdots$), with $\rho=\alpha,\beta$, and accounting for the fact that in this setup we have $-\Omega^*+\sin\alpha\sin\beta=0$, we get
\begin{equation}\label{eq:ex_k_4}
\begin{split}
    \dot\psi_i&=\frac{1}{N}\sum_{j=1}^N\left[ \sin\beta\cos\alpha(\psi_i-\psi_j)-\sigma_{ij}(\sin\alpha+\cos\alpha(\psi_i-\psi_j) \right]+\cdots\\
    \dot\sigma_{ij}&=-\ve\left( \cos\beta(\psi_i-\psi_j)+\sigma_{ij}+\cdots\right),
\end{split}    
\end{equation}
where the $\cdots$ indicate higher-order terms. Naturally, the origin $(\psi_i,\sigma_{ij})=(0,0)$ is (still) nilpotent for $\ve=\beta=0$, but now it is straightforward to notice that $\alpha=\pm\frac{\pi}{2}$ also induces that the origin is nilpotent (this case is not further discussed in this example). For the rest of this example, we only consider the leading part of \eqref{eq:ex_k_4}.

Let us propose the $\beta$-directional blow-up (in the rescaling chart) given by\footnote{Notice that this is equivalent to first going to the rescaling chart and then blowing up in the $\pm\beta$-direction.}
\begin{equation}\label{eq:ex_k_bu}
    \psi_i=r u_i, \quad \sigma_{ij}=r s_{ij}, \quad \alpha=r A, \quad \beta=\pm r,\quad \ve=r.
\end{equation}
This leads to the desingularized vector field (where we take only the leading order terms of the Taylor expansion of the trigonometric functions)
\begin{equation}\label{eq:ex_k_5}
    \begin{split}
        \dot r &=0\\
        \dot u_i &= \frac{1}{N}\sum_{j=1}^N\pm(u_i-u_j)-s_{ij}(A+(u_i-u_j))\\
        \dot s_{ij} &= -((u_i-u_j)+s_{ij}).
    \end{split}
\end{equation}
Now, the following conclusions on the local stability of the line of equilibria $u_i=u_j,\,s_{ij}=0$, which we denote by $\gamma$, are straightforward\footnote{It suffices to consider the leading part $\dot u_i=\frac{\pm 1+A}{N}\sum_{i=1}^N(u_i-u_j).$}: a) if $\beta>0$, corresponding to the plus sign in \eqref{eq:ex_k_5}, then $\gamma$ cannot be stable; b) if $\beta<0$, corresponding to the negative sign in \eqref{eq:ex_k_5}, then $\gamma$ is locally stable for $A<1$ and unstable for $A>1$. From this analysis, and recalling the blow-up map \eqref{eq:ex_k_bu}, the following statement is proven (compare with \cite[figure 4.2 (b)]{berner2018multi}):
\begin{proposition}
    The antipodal solutions of \eqref{eq:ex_k_1} are locally asymptotically stable for $\beta<-\alpha$ and $\ve>0$ sufficiently small.
\end{proposition}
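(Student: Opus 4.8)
The plan is to turn the stability question into a normal‑hyperbolicity statement for the desingularized vector field \eqref{eq:ex_k_5} and then transport the conclusion back through the blow‑up. The preparatory reductions are already carried out above: in the co‑rotating and shifted coordinates of \eqref{eq:ex_k_2}, specialized to $a_i\in\{0,\pi\}$, the antipodal relative equilibrium of \eqref{eq:ex_k_1} becomes the origin of \eqref{eq:ex_k_3}, and Taylor expanding yields \eqref{eq:ex_k_4}, whose linear part at the origin is nilpotent exactly when $\ve=\beta=0$. A direct linearization is therefore useless, and the $\beta$-directional blow‑up \eqref{eq:ex_k_bu} (equivalently, pass to the rescaling chart of the $\ve$-blow‑up and then blow up in the $\pm\beta$-direction, as in the footnote) followed by desingularization produces \eqref{eq:ex_k_5}. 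Since $\dot r=0$ there, $r$ plays the role of a parameter; for $r>0$ the blow‑up chart is a diffeomorphism onto an open set of the $(\psi,\sigma,\alpha,\beta,\ve)$-space, and desingularization is merely a time reparametrization, so the local phase portrait of \eqref{eq:ex_k_4} near the origin, for the corresponding small values of $(\alpha,\beta,\ve)$, is read off from that of \eqref{eq:ex_k_5} for small $r$. In the negative-$\beta$ chart one has $\beta=-r$, $\alpha=rA$, hence $A=-\alpha/\beta=\alpha/|\beta|$.

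The core step is the analysis of \eqref{eq:ex_k_5} along the line of equilibria $\gamma=\{u_i=u_j,\ s_{ij}=0\}$. This line is precisely the phase‑shift orbit $\psi_i\mapsto\psi_i+c$, so it is an invariant line of equilibria of the full (untruncated) system as well, and ``local asymptotic stability of the antipodal solution'' is to be understood as local asymptotic stability of $\gamma$ as a set. The claim to prove is that, for the minus sign and $A<1$, $\gamma$ is normally hyperbolic and attracting. Because the $s_{ij}$-block of \eqref{eq:ex_k_5} is uniformly contracting (linear part $-1$), an invariant‑manifold reduction shows that the transverse stability of $\gamma$ is governed by the reduced $u$-dynamics; to leading order this is $\dot u_i=\frac{-1+A}{N}\sum_{j=1}^N(u_i-u_j)$, and since $u\mapsto(\frac1N\sum_{j=1}^N(u_i-u_j))_i$ is the projection off $\mathrm{span}\{(1,\ldots,1)\}$ (eigenvalue $1$ of multiplicity $N-1$), the transverse spectrum equals $A-1$. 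Equivalently, computing the transverse linearization of \eqref{eq:ex_k_5} directly gives $s_{ij}=-(u_i-u_j)/(\lambda+1)$ and hence $(\lambda+1)^2=A$ on the $u$-active modes, together with $\lambda=-1$ on the pure-$s$ modes; either way all transverse eigenvalues lie in $\{\mathrm{Re}\,\lambda<0\}$ if and only if $A<1$. For the plus sign the same computation yields $\lambda^2=1+A$, which never places all roots in the open left half‑plane, so $\beta>0$ cannot give stability; and for $A>1$ a positive transverse eigenvalue appears. Consequently, for the minus sign and $A<1$, $\gamma$ is a normally hyperbolic attracting line of equilibria of \eqref{eq:ex_k_5}, uniformly for small $r$ and robustly under the higher‑order terms discarded in \eqref{eq:ex_k_4}.

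It remains to transport this back. Normal hyperbolicity and attractivity of $\gamma$ for \eqref{eq:ex_k_5}, combined with the diffeomorphism property of the chart for $r>0$ and the exact invariance of the phase‑shift orbit, give that for small parameters the antipodal one‑cluster relative equilibrium of \eqref{eq:ex_k_1} is (orbitally) locally asymptotically stable whenever $A<1$. In the negative-$\beta$ chart $A=\alpha/|\beta|$, so $A<1$ reads $\alpha<|\beta|=-\beta$, i.e.\ $\beta<-\alpha$, while ``$r$ small'' is exactly the regime of $\ve$ small (with $\alpha,\beta$ small), which is the proposition. \textbf{The main obstacle} I anticipate is this last transport step rather than the spectral computation: one must argue carefully that attractivity of $\gamma$ in the truncated, desingularized model \eqref{eq:ex_k_5} really yields local asymptotic stability of the original, symmetry‑degenerate antipodal solution for an \emph{open} set of parameters — this needs the uniform normal hyperbolicity of $\gamma$ to absorb both the omitted higher‑order terms of \eqref{eq:ex_k_4} and the dependence on the blow‑up scale, plus a precise formulation of ``locally asymptotically stable'' modulo the phase‑shift symmetry, and possibly the combination of several charts together with the scale‑invariance of the sign conditions (as in the diffusive example). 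The eigenvalue count along $\gamma$, although the technical heart, becomes routine once the contractivity of the $s_{ij}$-block is used.
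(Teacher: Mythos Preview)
Your proposal is correct and follows essentially the same route as the paper: pass to the co-rotating/shifted coordinates, blow up via \eqref{eq:ex_k_bu}, and read off the stability of the line $\gamma$ in \eqref{eq:ex_k_5} from the reduced $u$-dynamics $\dot u_i=\frac{\pm1+A}{N}\sum_j(u_i-u_j)$, which is exactly the paper's footnote computation. You are more explicit than the paper on two points it leaves implicit --- the direct transverse spectral calculation $(\lambda+1)^2=A$ (respectively $\lambda^2=1+A$) confirming the threshold $A=1$, and the transport/normal-hyperbolicity argument carrying stability of $\gamma$ back through the blow-up to the original antipodal solution --- but the underlying strategy is identical.
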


\section{Conclusions and Discussion}

We have studied the problem of structure preservation under the blow-up transformation of network dynamical systems, including adaptive ones. This transformation is a useful tool to rigorously analyze the dynamics of differential equations close to nilpotent equilibria. The essential philosophy of the method is to transform a singular problem into a regular one. 

The main conclusion from our analysis is that, indeed, the blow-up transformation preserves the network structure. This means that the local vector fields obtained after a directional blow-up can still be interpreted as a network dynamical system. The precise interpretation turns out to depend on the direction of blow-up and the model at hand. Moreover, via a series of examples, we have shown that (as in the classical low-dimensional setting) the blow-up helps to desingularize nilpotent singularities.   

Besides structure preservation, we have noticed that the blow-up induces parameters to become dynamic. Nevertheless, such dynamics seem to occur at higher-orders, which could yield another natural way to uncover higher-interactions in networks. In the particular case of edge-directional blow-up, including the case of adaptive networks, the main message is that singular networks with static topology are to be regularized as network dynamical systems with co-evolving edges; hence, this provides a route to potentially uncover adaptive network dynamics.

Several open problems stem from the analysis presented here, from which we highlight the following: 
\begin{enumerate}
    \item The problem of desingularization has been addressed here only in the examples. It would be interesting to relate the network topology and the nature of internal dynamics, interaction, and possible adaptation, to better describe the desingularization procedure via blow-up. In particular, one may want to study classes of networked systems where the corresponding vector field is quasi-homogeneous.
    \item For high-dimensional problems, it may become unrealistic to blow-up in all possible directions. This raises new directions for improving and better adapting the blow-up method for networks: a) one may want to blow-up in several directions simultaneously, b) one may want to devise techniques that distinguish between regular and singular directions, or c) one may want to develop symbolic computational tools that automatically provide the local vector fields in the relevant charts where the dynamics have been desingularized.
    \item Choosing the blow-up weights is, even for low dimensions, quite challenging. In the context of networks, it is not unfeasible to expect that the topology of the network induces, or forces, some patterns in the blow-up weights. For example, the analysis carried out in section \ref{sec:main_bu} suggests that, due to the network structure, all nodes are to be blown up with the same weight, provided that the linear part is not identically zero. In this regard, it is known that the quasi-homogeneous blow-up for planar systems can be related to a Newton polyhedron, see \cite{alvarez2011survey}. In higher dimensions, the quasihomogeneous blow-up should naturally be related to a polytope. It would be interesting to see if there is a relation between the blow-up polytope and the topology of the network to be desingularized. 
    \item A way to describe the dynamics of large networks is via mean field limits, which essentially provides a low-dimensional averaged description (e.g. via a PDE, but in certain cases even a low-dimensional system of ODEs) of the network. It would be interesting to investigate if a desingularization of a mean-field limit corresponds, in any way, to the desingularization of the corresponding large, but finite, network.
\end{enumerate}

\bibliographystyle{plainnat}
\bibliography{bib}

\end{document}